\documentclass[12pt]{amsart}
\usepackage{amssymb}
\newcommand{\D}{\mathcal{D}}

\newtheorem{theorem}{Theorem}[section]
\newtheorem*{theoremintro}{Main Theorem}

\newtheorem{lemma}[theorem]{Lemma}
\newtheorem{corollary}[theorem]{Corollary}
\theoremstyle{remark}
\newtheorem{remark}[theorem]{Remark}

\theoremstyle{definition}
\newtheorem{definition}[theorem]{Definition}

\newcommand{\dyadicSquares}{\mathcal{D}^d}
\newcommand{\dyadicInterval}{\mathcal{D}}
\newcommand{\mathI}{\mathcal{I}}
\newcommand{\mathJ}{\mathcal{J}}
\newcommand{\mathK}{\mathcal{K}}
\newcommand{\mathM}{\mathcal{M}}

\newcommand{\mathP}{\mathcal{P}}
\newcommand{\mathR}{\mathcal{R}}
\newcommand{\mathS}{\mathcal{S}}

\newcommand{\sizeOfSet}[1]{\mid #1\mid}
\newcommand{\coefAt}[3]{c_{#1}^{(#2)}(#3)}
\newcommand{\coefAbs}[3]{\mid c_{#1}^{(#2)}(#3)\mid}
\newcommand{\coefAbsGen}[2]{\mid c_{#1}^{(#2)}\mid}
\newcommand{\haar}[2]{h_{#1}^{(#2)}}
\newcommand{\Id}[1]{\chi_{#1}}

\newcommand{\chain}[2]{\mathbb{C}(#1,#2)}
\newcommand{\father}[1]{F(#1)}
\newcommand{\spectrum}[1]{sp(#1)}
\newcommand{\SPectrum}[1]{SP(#1)}
\newcommand{\Ld}{L_1[0,1]^d}
\newcommand{\LtwoDim}{L_1[0,1]^2}
\newcommand{\son}[2]{\mathit{son}(#1,#2)}
\newcommand{\sonOrder}[3]{\mathit{son}^{#3}(#1,#2)}

\begin{document}

\baselineskip 16.5pt
\title[Convergence of Weak Greedy algorithm] {On the Convergence of a Weak Greedy Algorithm for the Multivariate Haar Basis}
\author[Dilworth]{S. J. Dilworth}
\address{Department of Mathematics\\ University of South Carolina\\
Columbia, SC 29208\\ U.S.A.} \email{dilworth@math.sc.edu}
\author[Gogyan]{S. Gogyan}
\address{Institute of Mathematics,\\ Armenian Academy of Sciences, 24b Marshal Baghramian ave.\\ 
Yerevan, 0019\\ Armenia }
\email{gogyan@instmath.sci.am}
\author[Kutzarova]{Denka Kutzarova}
\address{Institute of Mathematics, Bulgarian Academy of
Sciences, Sofia, Bulgaria.} \curraddr{Department of Mathematics, University of Illinois
at Urbana-Champaign, Urbana, IL 61801, U.S.A.} \email{denka@math.uiuc.edu}
\thanks{The first author was partially supported by NSF grant DMS1101490.  
All authors were participants at the NSF supported
Workshop in Analysis and Probability at Texas A\&M University in 2011.}

\subjclass[2000]{Primary: 41A65. Secondary: 42A10, 46B20}
\keywords{weak greedy algorithm; multivariate Haar basis.}
\begin{abstract} We define a family of weak thresholding greedy algorithms for the multivariate Haar basis for $\Ld$ ($d \ge 1$).
We prove convergence and uniform boundedness of the weak greedy approximants for all $f \in \Ld$.
\end{abstract} \maketitle \tableofcontents
\section{Introduction}\label{sec: intro}

Let $\Psi= (\psi_n)_{n=1}^\infty$ be a semi-normalized Schauder basis for a Banach space $X$. For $f \in X$, let $(c_n(f))_{n=1}^\infty$ denote the sequence of basis coefficients 
for $f$. The \textit{Thresholding Greedy Algorithm} (TGA) was introduced by Temlyakov \cite{T3} for the trigonometric system and subsequently extended to the Banach space setting by Konyagin and Temlyakov \cite{KT1}. See \cite{T1} and the recent monograph \cite{T2} for the history of the problem and for background information on greedy approximation. The algorithm is defined as follows. For $f \in X$ and $n \ge1$, let $\Lambda_n(x) \subset \mathbb{N}$ be the indices corresponding to a choice of $n$ largest coefficients of $f$ in absolute value, i.e., $\Lambda_n(f)$ satisfies
\begin{equation} \label{eq: tga} \min\{|c_i(f)| \colon i \in \Lambda_n(f)\} \ge \max\{|c_i(f)| \colon i \notin \Lambda_n(f)\}. \end{equation}
(Note that $\Lambda_n(f)$ is uniquely defined if and only if there is strict inequality in \eqref{eq: tga}.)  
We call $G_n(f) := \sum_{i \in \Lambda_n(f)} c_i(f) \psi_i$  an \textit{$n^{th}$ greedy approximant to $f$} and say that the TGA converges if $G_n(f) \rightarrow f$. The basis $\Psi$ is said to be \textit{quasi-greedy} if there exists $K < \infty$ such that for all $f \in X$  and $n \ge 1$, we have $\|G_n(f)\|\le K \|f\|$. Wojtaszczyk \cite[Theorem~1]{W} proved that $\Psi$ is quasi-greedy if and only if the TGA converges for all initial vectors $f \in X$.

It was proved in \cite[Remark~6.3]{DKW} that the one-dimensional Haar basis for $L_1[0,1]$ (normalized in $L_1[0,1]$) is \textit{not} quasi-greedy, i.e., that the TGA does not converge for certain initial vectors $f$. However, it was proved in \cite{G} that there is a \textit{weak thresholding greedy algorithm} (WTGA) for the Haar basis which  converges. 

A  WTGA is a procedure of  the following general type.  Fix a \textit{weakness parameter} $t$ with $0 < t < 1$. For each $f \in X$, define an increasing sequence $(\Lambda^t_n(f))$ of sets, consisting of  of $n$ coefficient indices, such that
\begin{equation} \label{eq: wtga} \min\{|c_i(f)| \colon i \in \Lambda^t_n(f)\} \ge t\max\{|c_i(f)| \colon i \notin \Lambda_n(f)\}. \end{equation}
The  WTGA is said to converge if the sequence of  \textit{weak greedy approximants} $G^t_n(f) := \sum_{i \in \Lambda^t_n(f)} c_i(f) \psi_i$ converges  to $f$.  It was proved in \cite{KT2} that quasi-greediness of $\Psi$ guarantees  convergence for every WTGA. 

However, if $\Psi$ is \textit{not} quasi-greedy then the index sets $(\Lambda^t_n(f))$ must be carefully chosen to ensure convergence. For the WTGA defined in \cite{G} it was proved that the algorithm converges and that the weak greedy approximants are uniformly bounded, i.e., that
$\|G^t_n(f)\| \le K(t)\|f\|$, where $K(t)$ depends only on the weakness parameter.

In \cite{T} it was proved for the multivariate Haar system, normalized in $L_p[0,1]^d$, for $d \ge 1$ and $1 < p < \infty$, that $$\|f - G_n(f)\| \le C(p,d) \sigma_n(f),$$
where $\sigma_n(f)$ denotes the error in the best $n$-term approximation to $f$ in the  $L_p$ norm using the multivariate Haar system. As remarked above, convergence fails for $p=1$.
The goal of the present paper is to extend the results of \cite{G} to the multivariate Haar system for $L_1[0,1]^d$.  The case $d =2$ is especially interesting from the point of view of practical applications, and we refer the reader to \cite{BF} (and its references)  for a nice exposition of the two-dimensional discrete Haar wavelet transform and its use in image compression. 

Some serious obstacles have to be overcome in extending the one-dimensional results  to higher dimensions. These difficulties  are for the most part already present in the case $d=2$. On the other hand, the passage  from $d=2$  to $d \ge 3$
is relatively straightforward. 

The first obstacle in extending the one-dimensional algorithm of \cite{G}, which impeded progress on this problem for a considerable period, is that  `` the obvious generalization''   fails to converge. Therefore, a more complicated algorithm is required, which depends on \textit{two} parameters: the weakness parameter $t$ and  a second parameter $s$, where $0 < t < s$. An important feature of the algorithm, which it shares with the simpler one-dimensional algorithm,  is that  the weak greedy approximant is updated by applying a basic \textit{greedy step} to the residual vector $R^t_n(f)=f - G^t_n(f)$. The form of this  greedy step  ensures that the algorithm is \textit{branch-greedy} in the sense of \cite{DKSW}. Roughly speaking, this means that the selection of the next coefficient 
in the basic greedy step \textit{depends only on} the natural (finite)  data set for weak thresholding consisting of all pairs
$$\{(i, c_i(R_n(f))) \colon |c_i(R_n(f)| \ge t \max_{j \ge 1} |c_j(R_n(f)|\}.$$

Let us now describe the contents of the paper.  In Section~2 we recall the definition of the multivariate Haar system, describe the weak threshoding greedy algorithm alluded to in the title of the paper, and state the Main Theorem.  The proof of the Main Theorem, which is presented in Section~7, uses two key lemmas which are proved in Sections~3--6. 

 The main result of Section~3 is the norm estimate Lemma~\ref{lem: coefDiff_Neighboor}. The results in Section~4 are based on the combinatorics of  dyadic cubes. The main result of this section is the first key lemma, namely the norm estimate Lemma~\ref{lem: from_q_to_qtilda}. Sections~5 and 6 are independent of Section~4. Section~5 contains an important
symmetrization result. Section~6 is devoted to the second key lemma. The proof of this lemma uses an induction argument which makes essential use of the symmetrization results of the previous section. 

 In Section~8 we show that the algorithms diverge for the boundary cases $s=t$ and $s=1$. This implies, in particular, that the multivariate Haar system is not quasi-greedy (Corollary~\ref{cor: notQG}).

\section{Multivariate Haar system, Definition of the Algorithm, and Main Theorem} \
In this paper we consider greedy algorithms for the multivariate Haar system. Let $\dyadicInterval_n$ be the set of dyadic intervals of length $2^{-n}$ and  let $\dyadicSquares_n := \dyadicInterval_n\times\ldots\times\dyadicInterval_n$ be the collection of all $d$-dimensional  dyadic cubes of side length $2^{-n}$.  Further, let $\dyadicInterval := \bigcup_{n\geq 1}\dyadicInterval_n$ and $\dyadicSquares := \bigcup_{n\geq 1}\dyadicSquares_n$.

For $a<b$, let
\begin{equation*}
r_{[a,b)}^{(0)} := {\Id{[a,b)}\over b-a},\quad r_{[a,b)}^{(1)}= {\Id{[a, {a+b\over 2})}-\Id{[{a+b\over 2},b)}\over b-a},
\end{equation*}
where $\chi$ denotes the characteristic function. 

There are $2^d-1$ different Haar functions corresponding to the dyadic cube $\mathI\in\dyadicSquares$, namely, for $1\leq j\leq 2^d-1$, 
\begin{equation*}
h_\mathI^{(j)}(x) := \prod_{k=1}^d r_{\mathI_k}^{(\epsilon_k)}(x_k),
\end{equation*}
where  $x=(x_1,\ldots,x_d)\in [0,1)^d$, $\mathI=\mathI_1\times\ldots\times\mathI_d$ and $\epsilon_k \in \{0,1\}$ are defined from the binary representation $j=\sum_{k=1}^d \epsilon_k 2^{d-k}$. The Haar system is the set of all functions $h_\mathI^{(j)}$ together with $\Id{[0,1)^d}$. The Haar coefficients are defined by
\begin{equation} \label{eq: haarcoef}
\coefAt{\mathI}{i}{f} = \mu(\mathI) \int_\mathI f h_\mathI^{(i)},
\end{equation}
where the integral is taken with respect to  ($d$-dimensional) Lebesgue measure $\mu$.
We write $[a,b)\prec[c,d)$ if $b-a > d-c$ or $b-a=d-c$ and $a<c$. Further, for $\mathI=\mathI_1\times\ldots\times\mathI_d\in\dyadicSquares$, $\mathJ=\mathJ_1\times\ldots\times\mathJ_d\in\dyadicSquares$, we write $\mathI\prec\mathJ$ if $\mathI$ precedes $\mathJ$ in the lexicographic ordering, i.e.,
\begin{itemize}
\item\ $\mathI_1\prec\mathJ_1$, or
\item\ $\mathI_1 = \mathJ_1$ and $\mathI_2 \prec \mathJ_2$, or \\
...............
\item\ $\mathI_1 = \mathJ_1,\ldots, \mathI_{d-1} = \mathJ_{d-1}$ and $\mathI_d \prec \mathJ_d$.
\end{itemize} 

Finally, we write $(\mathI,i)\prec(\mathJ,j)$ if $\mathI\prec\mathJ$ or $\mathI=\mathJ$ and $i<j$.

Note that each $\mathI \in \dyadicSquares_{n}$ is the disjoint union of $2^d$ subcubes belonging to $\dyadicSquares_{n+1}$ which we shall refer to as the  \textit{immediate successors} of $\mathI$.

Now we are ready to define the algorithm. For any $f\in\Ld$ and $0<t\leq s\leq 1$  we define the sequence $\{G_m^{t,s}(f)\}$ inductively. We put $G_0^{s,t}=G_0^{s,t}(f)=0$,\ $R_0^{s,t}=R_0^{s,t}(f)=f$ and for each $m\geq 1$ we define $G_m^{s,t}(f)$ and $R_m^{s,t}(f)$ in the following way

\begin{itemize}
\item[1)] Find the first cube in the order $\prec$, denoted $\Delta_m$, and then the smallest value of $j_m$, with $1 \le j_m \le 2^{d}-1$, such that $(\Delta_m,j_m)$ satisfies
\begin{equation*}
\mid\coefAt{\Delta_m}{j_m}{R_{m-1}^{s,t}}\mid = \max_{\mathI\in\dyadicSquares}\mid \coefAt{\mathI}{i}{R_{m-1}^{s,t}}\mid
\end{equation*}
\item[2)] Define $\tilde{\Delta}_m\supseteq\Delta_m$ to be the largest cube containing $\Delta_m$ such that for every $\mathI \in \dyadicSquares$ with 
$\Delta_m\subseteq\mathI\subseteq\tilde{\Delta}_m$ there exists $1\leq i\leq 2^d-1$ such that
\begin{equation*}
\mid\coefAt{\mathI}{i}{R_{m-1}^{s,t}}\mid \geq s \mid \coefAt{\Delta_m}{j_m}{R_{m-1}^{s,t}}\mid.
\end{equation*}
\item[3)] Find $1\leq i_m\leq 2^d-1$ for which $\mid \coefAt{\tilde{\Delta}_m}{i_m}{R_{m-1}^{s,t}}\mid$ is the smallest value to satisfy
\begin{equation} \label{eq: conditionstep3}
\mid\coefAt{\tilde{\Delta}_m}{i_m}{R_{m-1}^{s,t}}\mid \geq {t\over s} \max_{1\leq j\leq 2^d-1}\mid \coefAt{\tilde{\Delta}_m}{j}{R_{m-1}^{s,t}}\mid.
\end{equation}
\item[4)] Let
\begin{equation*}
G_m^{s,t}(f)  := G_{m-1}^{s,t} + \coefAt{\tilde{\Delta}_m}{i_m}{f}\haar{\tilde{\Delta}_m}{i_m},\  R_m^{s,t} := f - G_m^{s,t}
\end{equation*} be the updated greedy approximant and residual.

\end{itemize}

\begin{remark}
 Step 3)  may be modified by replacing the selection condition \eqref{eq: conditionstep3} by the following condition:
\begin{equation}
\mid\coefAt{\tilde{\Delta}_m}{i_m}{R_{m-1}^{s,t}}\mid \geq t \mid \coefAt{\Delta_m}{j_m}{R_{m-1}^{s,t}}\mid.
\end{equation}
This defines  a second weak greedy algorithm which has exactly the same convergence properties as the first algorithm.
\end{remark}

Now we can state our main result.

\begin{theoremintro} Let $0 < t < s < 1$. Then the weak greedy algorithms defined above converge, i.e.,  $f = \lim_{m \rightarrow\infty} G_m^{s,t}(f)$ for every $f \in \Ld$. Moreover, the greedy approximants are uniformly bounded, i.e., for all $f \in \Ld$ and for all $m \ge 1$, 
\begin{equation}  \label{eq: uniformbounded}\|G_m^{s,t}(f)\| \le C(d,s,t) \|f\|. \end{equation}
\end{theoremintro}

In Section~7 we show that $G_m^{s,t}$ does not converge when $s=t$ or $s=1$. From this, in particular from the case $s=t=1$, it follows that the multivariate Haar system is not a quasi-greedy basis in $\Ld$. 

\section{Norm Estimates by Expansion Coefficients}\label{sec: coefficients}
For any $f\in\Ld$, let
\begin{equation}\label{def: spectrum} 
\spectrum{f} := \{(\Delta,i)\ :\ \coefAt{\Delta}{i}{f}\neq 0\},
\end{equation}
and let
\begin{equation}\label{def: SPectrum} 
\SPectrum{f} := \{\Delta\ :\ (\Delta,i)\in\spectrum{f}\ \hbox{for some}\ 1\leq i\leq 2^d-1\}.
\end{equation}

Also, for any $\mathI\in\dyadicSquares$, let
\begin{equation}\label{restriction1} 
\mathP_\mathI f := f - \sum_{\Delta\subseteq\mathI}\sum_{i=1}^{2^d-1}\coefAt{\Delta}{i}{f} \haar{\Delta}{i}.
\end{equation}

Note that $\mathP_\mathI f$ is constant on $\mathI$ and it coincides with $f$ outside of $\mathI$. Recall, that the Haar system has the following 
monotonicity property:

\begin{equation*}\label{restriction} 
\Vert\mathP_\mathI f\Vert \geq \Vert\mathP_\mathJ f\Vert,\ \hbox{ for any }\ \mathI\subset\mathJ.
\end{equation*}
Also, let us denote the norm of $f\in\Ld$ on the set $\Delta$ by $\Vert f\Vert_\Delta$, i.e.,
\begin{equation*} \Vert f\Vert_\Delta := \int_\Delta |f|. \end{equation*}

Now we formulate two basic lemmas.

\begin{lemma} \label{lem: estimate_byCoef} Let $f\in\Ld$ and $\mathI, \mathJ\in\dyadicSquares$ and $\mathJ\subseteq\mathI$. Then
\begin{equation*}
\Vert f\Vert_\mathI\geq \coefAbs{\mathJ}{i}{f}\ \hbox{for any }\ 1\leq i\leq 2^d-1.
\end{equation*}
\end{lemma}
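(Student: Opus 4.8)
The plan is to prove the pointwise estimate $\Vert f\Vert_\mathI \ge \coefAbs{\mathJ}{i}{f}$ by reducing to the one-dimensional building blocks and exploiting the product structure of the multivariate Haar functions. First I would recall from \eqref{eq: haarcoef} that $\coefAt{\mathJ}{i}{f} = \mu(\mathJ)\int_\mathJ f\, h_\mathJ^{(i)}$, and that $h_\mathJ^{(i)} = \prod_{k=1}^d r_{\mathJ_k}^{(\epsilon_k)}$ takes only the values $0$ and $\pm 1/\mu(\mathJ)$ on $[0,1)^d$ — indeed $\Vert h_\mathJ^{(i)}\Vert_\infty = 1/\mu(\mathJ)$ since at least one factor is an $r^{(1)}$-type function (because $1 \le i \le 2^d-1$ forces some $\epsilon_k = 1$), and on $\mathJ$ each $r^{(1)}$ factor has absolute value $1/|\mathJ_k|$ while each $r^{(0)}$ factor equals $1/|\mathJ_k|$. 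Hence
\[
\coefAbs{\mathJ}{i}{f} = \mu(\mathJ)\,\Bigl|\int_\mathJ f\, h_\mathJ^{(i)}\Bigr| \le \mu(\mathJ)\int_\mathJ |f|\,|h_\mathJ^{(i)}| \le \mu(\mathJ)\cdot \frac{1}{\mu(\mathJ)} \int_\mathJ |f| = \Vert f\Vert_\mathJ.
\]

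Once this is in hand, the inclusion $\mathJ \subseteq \mathI$ immediately gives $\Vert f\Vert_\mathJ = \int_\mathJ |f| \le \int_\mathI |f| = \Vert f\Vert_\mathI$, since $|f| \ge 0$ and the integral is monotone under set inclusion. Chaining the two inequalities yields $\coefAbs{\mathJ}{i}{f} \le \Vert f\Vert_\mathI$, which is exactly the claim, and the argument is uniform in $1 \le i \le 2^d-1$.

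The only point requiring a little care — and the closest thing to an obstacle here — is the verification that $\Vert h_\mathJ^{(i)}\Vert_\infty = 1/\mu(\mathJ)$, i.e., that the normalization is as claimed and that the $L_\infty$ bound of the product is genuinely attained rather than being smaller. This rests on the elementary observation that $|r_{[a,b)}^{(0)}(x)| = |r_{[a,b)}^{(1)}(x)| = 1/(b-a)$ for every $x \in [a,b)$, so that $|h_\mathJ^{(i)}(x)| = \prod_{k=1}^d |r_{\mathJ_k}^{(\epsilon_k)}(x_k)| = \prod_{k=1}^d 1/|\mathJ_k| = 1/\mu(\mathJ)$ for all $x$ in the cube $\mathJ$ (and $h_\mathJ^{(i)}$ vanishes off $\mathJ$). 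With that computation recorded, the proof is complete; no further machinery is needed, and in particular the lemma is independent of the monotonicity property $\Vert\mathP_\mathI f\Vert \ge \Vert\mathP_\mathJ f\Vert$ stated just above it.
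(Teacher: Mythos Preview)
Your argument is correct and matches the paper's proof exactly: both use the formula \eqref{eq: haarcoef}, the pointwise bound $|h_\mathJ^{(i)}| = 1/\mu(\mathJ)$ on $\mathJ$, and then the trivial inclusion $\|f\|_\mathJ \le \|f\|_\mathI$. The paper compresses the whole chain into a single line, whereas you spell out the $L_\infty$ computation for $h_\mathJ^{(i)}$ in detail, but the content is identical.
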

\begin{proof} By \eqref{eq: haarcoef},
$$ \coefAbs{\mathJ}{i}{f}= \mu(\mathJ)\left| \int_\mathJ f h_\mathJ^{(i)}\right| \le \int_\mathJ |f| = \|f\|_\mathJ \le \|f\|_\mathI.$$
\end{proof}
\begin{lemma} \label{lem: estimate_forQ} Let $f\in\Ld$ and suppose $\coefAbs{\Delta}{i}{f}\leq 1$ for all $\Delta\in\dyadicSquares$ and for all $1\leq i\leq 2^d-1$. Then, for any dyadic cube $\mathI\in\dyadicSquares$, one has
\begin{equation*}
\Vert\mathP_\mathI f\Vert_\mathI\leq 1.
\end{equation*}
\end{lemma}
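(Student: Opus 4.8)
The plan is to reduce the estimate to an elementary bound on the constant value that $\mathP_\mathI f$ takes on $\mathI$. Since $\mathP_\mathI f$ is constant on $\mathI$, we have $\Vert\mathP_\mathI f\Vert_\mathI=\mu(\mathI)\,|\mathP_\mathI f(x_0)|$ for any $x_0\in\mathI$, so it is enough to show $|\mathP_\mathI f(x_0)|\le 1/\mu(\mathI)$. To this end I would fix the chain of dyadic ancestors of $\mathI$,
\[
\mathI=\mathJ_k\subsetneq\mathJ_{k-1}\subsetneq\cdots\subsetneq\mathJ_1\subsetneq\mathJ_0=[0,1)^d ,
\]
where $\mathJ_j$ is an immediate successor of $\mathJ_{j-1}$, so that $\mu(\mathJ_j)=2^{-jd}$ and $\mu(\mathI)=2^{-kd}$. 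Comparing the Haar expansion of $f$ with the sum $\sum_{\Delta\subseteq\mathI}\sum_i\coefAt{\Delta}{i}{f}\,\haar{\Delta}{i}$ subtracted off in the definition of $\mathP_\mathI f$, one sees that on $\mathI$ the function $\mathP_\mathI f$ reduces to the coefficient $a:=\int_{[0,1)^d}f$ of $\Id{[0,1)^d}$ plus the restrictions to $\mathI$ of the Haar functions attached to the proper ancestors $\mathJ_0,\dots,\mathJ_{k-1}$ (every other Haar term was either subtracted off or vanishes on $\mathI$, and each $\haar{\mathJ_j}{i}$ is constant on $\mathI$ because $\mathI$ lies inside a single immediate successor of $\mathJ_j$). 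Thus, on $\mathI$,
\[
\mathP_\mathI f\;\equiv\;a+\sum_{j=0}^{k-1}\;\sum_{i=1}^{2^d-1}\coefAt{\mathJ_j}{i}{f}\,\haar{\mathJ_j}{i}.
\]

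To conclude I would use the pointwise bound $|\haar{\Delta}{i}|\le 1/\mu(\Delta)$ on $\Delta$ (the same bound on Haar functions used to prove Lemma~\ref{lem: estimate_byCoef}), the hypothesis $\coefAbs{\mathJ_j}{i}{f}\le 1$, and the inequality $|a|\le 1$ for the leading coefficient. This last inequality is exactly the case $\mathI=[0,1)^d$ of the statement and I would treat it first as the base of the argument; it is automatic, with $a=0$, when one restricts to mean-zero $f$, which is the setting in which the lemma is applied, since the algorithm never alters the constant coefficient. Granting this and using $|\haar{\mathJ_j}{i}|=1/\mu(\mathJ_j)=2^{jd}$ on $\mathI\subseteq\mathJ_j$ (with $2^d-1$ choices of $i$), the displayed identity gives, for $x_0\in\mathI$,
\[
|\mathP_\mathI f(x_0)|\;\le\;|a|+(2^d-1)\sum_{j=0}^{k-1}2^{jd}\;=\;|a|+2^{kd}-1\;\le\;2^{kd}.
\]
Multiplying by $\mu(\mathI)=2^{-kd}$ yields $\Vert\mathP_\mathI f\Vert_\mathI\le 1$, with the sharper bound $1-2^{-kd}$ in the mean-zero case $a=0$.

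The same argument can be phrased as an induction on the generation $k$: with $\mathJ$ the parent of $\mathI$, the identity above becomes $\mathP_\mathI f\equiv\mathP_{\mathJ}f+\sum_{i=1}^{2^d-1}\coefAt{\mathJ}{i}{f}\,\haar{\mathJ}{i}$ on $\mathI$, giving $\Vert\mathP_\mathI f\Vert_\mathI\le 2^{-d}\Vert\mathP_{\mathJ}f\Vert_{\mathJ}+2^{-d}(2^d-1)$; since $2^{-d}\cdot1+2^{-d}(2^d-1)=1$, the bound $\le1$ propagates down the dyadic tree from the root $[0,1)^d$. I do not expect a genuine obstacle here — this is a soft lemma — the only things to watch being the bookkeeping that makes the geometric sum telescope to exactly $2^{kd}$, so that scaling by $\mu(\mathI)$ lands exactly at $1$, and keeping track of the constant part $a$, which is absorbed into the base case.
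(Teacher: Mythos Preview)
Your proof is correct and follows essentially the same line as the paper's: fix the chain of dyadic ancestors of $\mathI$, express the constant value of $\mathP_\mathI f$ on $\mathI$ as the constant term plus the Haar contributions from the proper ancestors, bound each term pointwise using $|h_{\mathJ_j}^{(i)}|=2^{jd}$, and sum the geometric series so that multiplying by $\mu(\mathI)=2^{-kd}$ gives exactly $1$. You are in fact more careful than the paper about the constant coefficient $a=\int f$, which the paper's proof simply bounds by $1$ without comment.
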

\begin{proof} Let $k$ be defined by  $\mu(\mathI):=2^{-kd}$. Let the chain of dyadic cubes  $\{\Delta_j\}_{j=0}^{k-1}$ be defined by the following conditions:
\begin{equation*}
i)\ \mathI\subset\Delta_j,\quad ii)\ \mu(\Delta_j)=2^{-jd}.
\end{equation*}
It is clear that $\mathP_\mathI f$ is constant on $\mathI$ and that for any $x\in\mathI$ one has
\begin{equation*}
\mathP_\mathI f(x) = \coefAt{\Delta_0}{0}{f}  + \sum_{j=0}^{k-1}\sum_{i=1}^{2^d-1} \coefAt{\Delta_j}{i}{f} \haar{\Delta_j}{i}(x).
\end{equation*}
Taking into account the fact that $\mid\haar{\Delta_j}{i}(x)\mid=2^{jd}$, we conclude that
\begin{equation*}
\Vert\mathP_\mathI(f)\Vert_\mathI \leq 2^{-kd}\Biggl(1 + \sum_{j=0}^{k-1}(2^d-1)\cdot 2^{jd}\Biggr)=1.
\end{equation*}
\end{proof}

\begin{lemma}\label{lem: coefDiff_SameLevel} Suppose $\mathJ\subset\mathI$ and $\mu(\mathJ)={\mu(\mathI)\over 2^d}$, where $\mathI, \mathJ \in \dyadicSquares$. Then
\begin{equation*}
\Vert f\Vert_{\mathI\setminus\mathJ} \geq \begin{cases} \mid \Vert\mathP_\mathI(f)\Vert_\mathI - \Vert\mathP_\mathJ(f)\Vert_\mathJ\mid\\
 {1\over 4}\mid \coefAbsGen{\mathI}{i}-\coefAbsGen{\mathI}{j}\mid\\ 
{1\over 4}\mid \Vert\mathP_\mathI(f)\Vert_\mathI-\coefAbsGen{\mathI}{j}\mid.  \end{cases} 
\end{equation*} For $d=2$ the constant $1/4$ may be improved to $1/2$.
\begin{proof} Here we  prove the lemma (with the improved constant of $1/2$) only for the case $d=2$. The case $d\geq 3$ is proved in  Section~9.

Let us prove the first statement. The first inequality in \eqref{eq: firststatement} below  follows from the monotonicity property of the Haar system, while the second  follows from the fact that $\mathP_\mathI(f)$ and $\mathP_\mathJ(f)$ are constant on $\mathI$ and $\mathJ$ respectively:
\begin{equation} \label{eq: firststatement} \begin{split}
\Vert f\Vert_{\mathI\setminus\mathJ} &\geq \mid \int_\mathI f - \int_\mathJ f\mid\\
&= | \int_\mathI \mathP_\mathI(f) - \int_\mathJ \mathP_\mathJ(f)| \\ 
&\geq \mid \Vert\mathP_\mathI(f)\Vert_\mathI - \Vert\mathP_\mathJ(f)\Vert_\mathJ\mid.
\end{split} \end{equation}

Let us prove the second and third statements of the lemma.
Let $\delta := \mu(\mathI)$. Let us  denote the value of $\mathP_\mathI(f)$ on the cube $\mathI$ by $H$ and the value of $\mathP_\mathJ(f)$ on the cube $\mathJ$ by $H_1$.

Clearly,
\begin{equation}\label{Norm_on_I}
\Vert\mathP_\mathI(f)\Vert_\mathI = \delta\mid H\mid,
\end{equation}
and, since $d=2$,
\begin{equation}\label{Norm_on_J}
\Vert\mathP_\mathJ(f)\Vert_\mathJ = {\delta\mid H_1\mid\over 4}.
\end{equation}

Note that for some choice of signs $\epsilon_1,\epsilon_2, \epsilon_3 =\pm 1$ one has
\begin{equation*}
H_1 = H + {\epsilon_1 c_{\mathI}^{(1)} + \epsilon_2 c_{\mathI}^{(2)} + \epsilon_3 c_{\mathI}^{(3)} \over \delta}.
\end{equation*}
Let $a_i := \epsilon_i c_{\mathI}^{(i)}$. Then by the monotonicity property of the Haar system one has

\begin{equation*}
\Vert f\Vert_{\mathI\setminus\mathJ} \geq {1\over 4} \Biggl(\mid H\delta+a_1-a_2-a_3\mid + \mid H\delta-a_1+a_2-a_3\mid + \mid H\delta-a_1-a_2+a_3\mid \Biggr).
\end{equation*}
Combining this inequality with \eqref{Norm_on_I} and using the triangle inequality one obtains the second and third  statements of the lemma with $1/4$ replaced by $1/2$.
\end{proof}
\end{lemma}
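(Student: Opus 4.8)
The plan is to prove all three inequalities for every $d\ge 1$ at once (so, in particular, reproving the displayed $d=2$ case), by passing to the $2^d$ local averages of $f$ over the immediate successors of $\mathI$. Put $\delta:=\mu(\mathI)$ and index the $2^d$ immediate successors of $\mathI$ by sign vectors $\sigma\in\{-1,+1\}^d$, where the $k$-th entry of $\sigma$ records which half of the $k$-th side of $\mathI$ the successor $\mathJ_\sigma$ occupies; let $\sigma_0$ be the index with $\mathJ_{\sigma_0}=\mathJ$. Write $H$ for the value of $\mathP_\mathI f$ on $\mathI$ and $H_\sigma$ for the value of $\mathP_{\mathJ_\sigma}f$ on $\mathJ_\sigma$; then $\|\mathP_{\mathJ_\sigma}f\|_{\mathJ_\sigma}=2^{-d}\delta|H_\sigma|$, and since every $\haar{\mathK}{i}$ with $\mathK\subseteq\mathJ_\sigma$ has mean zero on $\mathJ_\sigma$ we also have $\int_{\mathJ_\sigma}f=\int_{\mathJ_\sigma}\mathP_{\mathJ_\sigma}f=2^{-d}\delta H_\sigma$. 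Because $\mathJ_\sigma$ is an immediate successor of $\mathI$, on $\mathJ_\sigma$ one has $\mathP_{\mathJ_\sigma}f=\mathP_\mathI f+\sum_{i=1}^{2^d-1}\coefAt{\mathI}{i}{f}\haar{\mathI}{i}$, with $\haar{\mathI}{i}$ equal there to the constant $\delta^{-1}\epsilon_i(\sigma)$, where $\epsilon_i(\sigma)=\prod_{k\in S_i}\sigma_k\in\{-1,+1\}$ and $\emptyset\ne S_i\subseteq\{1,\dots,d\}$ is the set of coordinates in which $\haar{\mathI}{i}$ is of difference type. Consequently
\[ H_\sigma=H+\frac1\delta\sum_{i=1}^{2^d-1}\epsilon_i(\sigma)\coefAt{\mathI}{i}{f},\qquad \coefAt{\mathI}{i}{f}=\frac{\delta}{2^d}\sum_\sigma\epsilon_i(\sigma)H_\sigma,\qquad \delta H=\frac{\delta}{2^d}\sum_\sigma H_\sigma. \]
The only lower bound for $\|f\|_{\mathI\setminus\mathJ}$ I will use is the trivial
\[ \|f\|_{\mathI\setminus\mathJ}=\sum_{\sigma\ne\sigma_0}\|f\|_{\mathJ_\sigma}\ \ge\ \sum_{\sigma\ne\sigma_0}\Bigl|\int_{\mathJ_\sigma}f\Bigr|=\frac{\delta}{2^d}\sum_{\sigma\ne\sigma_0}|H_\sigma|, \]
used moreover with $\sigma$ ranging only over a sub-collection of the indices $\ne\sigma_0$.

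The first inequality is then immediate: $\|f\|_{\mathI\setminus\mathJ}\ge|\int_\mathI f-\int_\mathJ f|=|\delta H-2^{-d}\delta H_{\sigma_0}|\ge|\,\delta|H|-2^{-d}\delta|H_{\sigma_0}|\,|=|\,\|\mathP_\mathI f\|_\mathI-\|\mathP_\mathJ f\|_\mathJ\,|$ by the reverse triangle inequality (this is \eqref{eq: firststatement} with $4$ replaced by $2^d$). For the second and third I would use $|\,|a|-|b|\,|=\min(|a-b|,|a+b|)$. Given $i\ne j$, the two complementary sets $B:=\{\sigma:\epsilon_i(\sigma)\ne\epsilon_j(\sigma)\}$ and $B^c$ have $2^{d-1}$ elements each (because $S_i\triangle S_j\ne\emptyset$), and the identities above give $|\coefAt{\mathI}{i}{f}-\coefAt{\mathI}{j}{f}|\le\frac{\delta}{2^{d-1}}\sum_{\sigma\in B}|H_\sigma|$ and $|\coefAt{\mathI}{i}{f}+\coefAt{\mathI}{j}{f}|\le\frac{\delta}{2^{d-1}}\sum_{\sigma\in B^c}|H_\sigma|$. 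Since $\sigma_0$ belongs to exactly one of $B,B^c$, the other is contained in $\{\sigma\ne\sigma_0\}$; summing over that half in the trivial bound gives $\|f\|_{\mathI\setminus\mathJ}\ge\frac12|\coefAt{\mathI}{i}{f}\pm\coefAt{\mathI}{j}{f}|\ge\frac12|\,\coefAbsGen{\mathI}{i}-\coefAbsGen{\mathI}{j}\,|$. The third inequality follows from the same argument with $(\coefAt{\mathI}{i}{f},\coefAt{\mathI}{j}{f})$ replaced by $(\delta H,\coefAt{\mathI}{j}{f})$, using $\delta H=\frac{\delta}{2^d}\sum_\sigma H_\sigma$, $\delta|H|=\|\mathP_\mathI f\|_\mathI$, and the set $\{\sigma:\epsilon_j(\sigma)=-1\}$ of size $2^{d-1}$; it yields $\|f\|_{\mathI\setminus\mathJ}\ge\frac12|\,\|\mathP_\mathI f\|_\mathI-\coefAbsGen{\mathI}{j}\,|$. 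In particular this recovers the claimed constant $1/4$ (in fact with $1/2$), uniformly in $d$.

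The one delicate point is the sign bookkeeping in the preceding paragraph: one must verify that, as functions of $\sigma$, each of $\coefAt{\mathI}{i}{f}\pm\coefAt{\mathI}{j}{f}$ and $\delta H\pm\coefAt{\mathI}{j}{f}$ is supported on a \emph{balanced} half of $\{-1,+1\}^d$, and that one such half avoids $\sigma_0$. Using a whole balanced half of the successors, rather than just two well-chosen ones as the inline $d=2$ proof effectively does, is what keeps the constant from deteriorating like $2^{-(d-1)}$ as $d$ grows. Everything else is the reverse triangle inequality, the inequality $\int|f|\ge|\int f|$ on each successor, and the two standard facts used above — that $\mathP_\mathK f$ is constant on $\mathK$ and that $\haar{\mathK}{i}$ takes the values $\pm\mu(\mathK)^{-1}$ — so no further machinery is required.
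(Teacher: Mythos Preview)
Your proof is correct and follows essentially the same strategy as the paper: both express the Haar coefficients $c_\mathI^{(i)}$ and the local mean $\delta H$ as signed averages of the successor averages $H_\sigma$, bound $\|f\|_{\mathJ_\sigma}\ge|\int_{\mathJ_\sigma}f|=2^{-d}\delta|H_\sigma|$, and sum over a suitable half of the successors avoiding $\mathJ$. The paper (in its Appendix for $d\ge3$) normalizes signs so that $\sigma_0$ corresponds to the all-$+1$ pattern and then sums over $\{\sigma:\epsilon^{(p_0)}_\sigma=-1\}$; this gives the third inequality with constant $1/2$, and the second is then obtained by averaging two instances of the third, which costs a factor of $2$ and yields only $1/4$.

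Your organization is slightly sharper: by using $||a|-|b||\le\min(|a-b|,|a+b|)$ and observing that $c_\mathI^{(i)}\pm c_\mathI^{(j)}$ are each supported on complementary halves $B,B^c$ of $\{-1,+1\}^d$, exactly one of which avoids $\sigma_0$, you obtain the second inequality directly with constant $1/2$ for every $d$, not just $d=2$. So your argument actually improves the stated lemma (the constant $1/4$ can be replaced by $1/2$ throughout). The identity $||a|-|b||=\min(|a-b|,|a+b|)$ for real $a,b$ is correct, and your ``balanced half'' claim $|B|=|B^c|=2^{d-1}$ holds because $\epsilon_i\epsilon_j$ is a nontrivial character on $\{-1,+1\}^d$ when $i\ne j$. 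Nothing is missing.
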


\begin{lemma}\label{lem: coefDiff_Neighboor} 
Let $\mathK\subset\mathJ\subset\mathI$ with $\mu(\mathJ)={\mu(\mathI)\over 2^d}$ and $\mu(\mathK)={\mu(\mathJ)\over 2^d}$. Then
\begin{equation*}								
\Vert f\Vert_{\mathI\setminus\mathK} \geq \mid { \coefAbsGen{\mathI}{i}- \coefAbsGen{\mathJ}{j}\over 8} \mid,
\end{equation*}
for all $1\leq i,j\leq 2^d-1$.
\end{lemma}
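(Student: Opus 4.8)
The plan is to deduce the estimate from Lemma~\ref{lem: coefDiff_SameLevel} by chaining through the two levels $\mathI\supset\mathJ\supset\mathK$, using the constant values $\Vert\mathP_\mathI f\Vert_\mathI$ and $\Vert\mathP_\mathJ f\Vert_\mathJ$ as intermediate quantities. First, since $\mathK\subset\mathJ\subset\mathI$, the sets $\mathI\setminus\mathJ$ and $\mathJ\setminus\mathK$ are disjoint with union $\mathI\setminus\mathK$, so
\[
\Vert f\Vert_{\mathI\setminus\mathK} = \Vert f\Vert_{\mathI\setminus\mathJ} + \Vert f\Vert_{\mathJ\setminus\mathK}.
\]
Note also that $\mathJ$ is an immediate successor of $\mathI$ and $\mathK$ is an immediate successor of $\mathJ$, since $\mu(\mathJ)=\mu(\mathI)/2^d$ and $\mu(\mathK)=\mu(\mathJ)/2^d$; hence Lemma~\ref{lem: coefDiff_SameLevel} applies to both pairs.

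I would then apply the third estimate of Lemma~\ref{lem: coefDiff_SameLevel} to the pair $\mathJ\subset\mathI$, specializing its free Haar index to $i$, and to the pair $\mathK\subset\mathJ$, specializing its free index to $j$; and I would apply the first estimate of Lemma~\ref{lem: coefDiff_SameLevel} to the pair $\mathJ\subset\mathI$. This yields the three bounds
\begin{align*}
\bigl|\,\coefAbsGen{\mathI}{i} - \Vert\mathP_\mathI f\Vert_\mathI\,\bigr| &\le 4\Vert f\Vert_{\mathI\setminus\mathJ},\\
\bigl|\,\Vert\mathP_\mathI f\Vert_\mathI - \Vert\mathP_\mathJ f\Vert_\mathJ\,\bigr| &\le \Vert f\Vert_{\mathI\setminus\mathJ},\\
\bigl|\,\Vert\mathP_\mathJ f\Vert_\mathJ - \coefAbsGen{\mathJ}{j}\,\bigr| &\le 4\Vert f\Vert_{\mathJ\setminus\mathK}.
\end{align*}
Adding these and using the triangle inequality gives
\begin{align*}
\bigl|\,\coefAbsGen{\mathI}{i} - \coefAbsGen{\mathJ}{j}\,\bigr| &\le 4\Vert f\Vert_{\mathI\setminus\mathJ} + \Vert f\Vert_{\mathI\setminus\mathJ} + 4\Vert f\Vert_{\mathJ\setminus\mathK}\\
&\le 8\bigl(\Vert f\Vert_{\mathI\setminus\mathJ} + \Vert f\Vert_{\mathJ\setminus\mathK}\bigr) = 8\Vert f\Vert_{\mathI\setminus\mathK},
\end{align*}
which is the assertion of the lemma.

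Since every ingredient is one of the already-established inequalities of Lemma~\ref{lem: coefDiff_SameLevel}, there is essentially no obstacle here: the only points needing attention are that $\mathJ$ and $\mathK$ are genuine immediate successors (so that Lemma~\ref{lem: coefDiff_SameLevel} is applicable) and that its estimates, being valid for every Haar index, may be specialized to the indices $i$ and $j$. I do not expect any step to be difficult; in fact the argument above even yields the stronger constant $1/5$ in place of $1/8$, since the middle term contributes only $\Vert f\Vert_{\mathI\setminus\mathJ}$, but the weaker bound $1/8$ is all that is needed later.
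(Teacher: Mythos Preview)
Your proof is correct and follows essentially the same approach as the paper: both arguments split $\Vert f\Vert_{\mathI\setminus\mathK}=\Vert f\Vert_{\mathI\setminus\mathJ}+\Vert f\Vert_{\mathJ\setminus\mathK}$ and chain the first and third estimates of Lemma~\ref{lem: coefDiff_SameLevel} through the intermediate quantities $\Vert\mathP_\mathI f\Vert_\mathI$ and $\Vert\mathP_\mathJ f\Vert_\mathJ$. The only cosmetic difference is that the paper first averages the two bounds on $\Vert f\Vert_{\mathI\setminus\mathJ}$ before adding, whereas you add the three inequalities directly; as you observe, your bookkeeping actually gives the sharper constant $1/5$.
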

\begin{proof}
Using Lemma \ref{lem: coefDiff_SameLevel} one gets
\begin{equation*}\Vert f\Vert_{\mathI\setminus\mathJ} \ge \frac{1}{2}\left(\mid{ \coefAbsGen{\mathI}{i}- \Vert\mathP_\mathI(f)\Vert_\mathI\over 4}\mid  + \mid{ \Vert\mathP_\mathI(f)\Vert_\mathI - \Vert\mathP_\mathJ(f)\Vert_\mathJ}\mid\right) \end{equation*}
and 
\begin{equation*}\Vert f\Vert_{\mathJ\setminus\mathK} \ge \mid{  \coefAbsGen{\mathJ}{j}- \Vert\mathP_\mathJ(f)\Vert_\mathJ\over 4}\mid. \end{equation*}
Hence
\begin{equation*}
\begin{split}
\Vert f\Vert_{\mathI\setminus\mathK} =& \Vert f\Vert_{\mathI\setminus\mathJ} + \Vert f\Vert_{\mathJ\setminus\mathK} \\
\geq & \mid{ \coefAbsGen{\mathI}{i}- \Vert\mathP_\mathI(f)\Vert_\mathI\over 8}\mid  + \mid{ \Vert\mathP_\mathI(f)\Vert_\mathI - \Vert\mathP_\mathJ(f)\Vert_\mathJ\over 2}\mid\\
+ & \mid{  \coefAbsGen{\mathJ}{j}- \Vert\mathP_\mathJ(f)\Vert_\mathJ\over 4}\mid\\
\geq & \mid { \coefAbsGen{\mathI}{i}- \coefAbsGen{\mathJ}{j}\over 8}\mid. 
\end{split}
\end{equation*}
\end{proof}

\begin{lemma}\label{lem: coefDiff_general}
Let $\mathK\subsetneq\mathJ\subset\mathI$. Then
\begin{equation*}								
\Vert f\Vert_{\mathI\setminus\mathK} \geq \mid { \coefAbsGen{\mathI}{i}- \coefAbsGen{\mathJ}{j}\over 16} \mid.
\end{equation*}
\end{lemma}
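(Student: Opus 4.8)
The plan is to reduce the general case $\mathK \subsetneq \mathJ \subset \mathI$ to the special configuration already handled in Lemma~\ref{lem: coefDiff_Neighboor}, where the three cubes sit at consecutive dyadic levels. First I would choose an auxiliary cube $\mathJ'$ with $\mathK \subseteq \mathJ' \subsetneq \mathJ$ and $\mu(\mathJ') = \mu(\mathJ)/2^d$ (possible since $\mathK \subsetneq \mathJ$, so $\mathK$ lies in one of the immediate successors of $\mathJ$, and we take $\mathJ'$ to be that successor). Likewise, since $\mathJ \subset \mathI$ is a proper containment of dyadic cubes, there is a cube $\mathI'$ with $\mathJ \subseteq \mathI' \subsetneq \mathI$ and $\mu(\mathI') = \mu(\mathI)/2^d$, namely the immediate successor of $\mathI$ containing $\mathJ$. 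Now I have a chain $\mathJ' \subset \mathJ \subset \mathI'$ at three consecutive levels (if $\mathJ' \subsetneq \mathJ$ and $\mathJ \subsetneq \mathI'$ are both strict one-level steps), but in fact $\mathJ$ sits strictly between $\mathI'$ and $\mathJ'$, so $\mathJ' \subset \mathJ \subset \mathI'$ with the required measure relations — that is precisely the hypothesis of Lemma~\ref{lem: coefDiff_Neighboor} applied to $(\mathI',\mathJ,\mathJ')$... but I need to compare $c_\mathI^{(i)}$ with $c_\mathJ^{(j)}$, not $c_{\mathI'}$ with $c_{\mathJ}$, so I should be more careful about which cube plays which role.

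A cleaner route: apply Lemma~\ref{lem: coefDiff_SameLevel} twice. Pick the immediate successor $\mathI'$ of $\mathI$ with $\mathJ \subseteq \mathI'$; by the third inequality of Lemma~\ref{lem: coefDiff_SameLevel},
\begin{equation*}
\Vert f\Vert_{\mathI\setminus\mathI'} \ge \tfrac14\,\bigl|\,\Vert\mathP_\mathI(f)\Vert_\mathI - \coefAbsGen{\mathI}{i}\,\bigr|,
\end{equation*}
wait — I want $\coefAbsGen{\mathI}{i}$ versus a norm of $\mathP$ on a subcube, so I use the second and first inequalities in tandem. Concretely, by the second inequality with the role of the two coefficients taken by $c_\mathI^{(i)}$ and the coefficient realizing $\Vert\mathP_{\mathI'}\Vert$... this is getting delicate. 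The robust version is the one already executed in the proof of Lemma~\ref{lem: coefDiff_Neighboor}: from Lemma~\ref{lem: coefDiff_SameLevel} applied to the pair $(\mathI,\mathI')$ we get $\Vert f\Vert_{\mathI\setminus\mathI'} \ge \tfrac14|\coefAbsGen{\mathI}{i} - \Vert\mathP_\mathI(f)\Vert_\mathI|$ together with $\Vert f\Vert_{\mathI\setminus\mathI'} \ge |\Vert\mathP_\mathI(f)\Vert_\mathI - \Vert\mathP_{\mathI'}(f)\Vert_{\mathI'}|$, hence (splitting $\Vert f\Vert_{\mathI\setminus\mathI'}$ into two halves as in that proof) a lower bound of the form $\tfrac18|\coefAbsGen{\mathI}{i} - \Vert\mathP_\mathI(f)\Vert_\mathI| + \tfrac12|\Vert\mathP_\mathI(f)\Vert_\mathI - \Vert\mathP_{\mathI'}(f)\Vert_{\mathI'}|$. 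Repeating the monotonicity chain $\mathI' \supseteq \mathJ$ and then applying Lemma~\ref{lem: coefDiff_SameLevel} to the pair $(\mathJ, \mathJ')$ where $\mathJ'$ is the immediate successor of $\mathJ$ containing $\mathK$, I get $\Vert f\Vert_{\mathJ\setminus\mathJ'} \ge \tfrac14|\coefAbsGen{\mathJ}{j} - \Vert\mathP_\mathJ(f)\Vert_\mathJ|$.

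Then I would assemble: $\Vert f\Vert_{\mathI\setminus\mathK} \ge \Vert f\Vert_{\mathI\setminus\mathI'} + \Vert f\Vert_{\mathJ\setminus\mathJ'}$ (these two annular regions are disjoint because $\mathI\setminus\mathI'$ is disjoint from $\mathI' \supseteq \mathJ \supseteq \mathJ \setminus \mathJ'$, and $\mathJ\setminus\mathJ' \subseteq \mathJ \subseteq \mathI' \subseteq \mathI$, and both lie in $\mathI\setminus\mathK$ since $\mathK \subseteq \mathJ' \subseteq \mathI'$). Inserting the bounds above and inserting also the telescoping term $|\Vert\mathP_\mathI(f)\Vert_\mathI - \Vert\mathP_{\mathJ}(f)\Vert_{\mathJ}| \le |\Vert\mathP_\mathI(f)\Vert_\mathI - \Vert\mathP_{\mathI'}(f)\Vert_{\mathI'}| + |\Vert\mathP_{\mathI'}(f)\Vert_{\mathI'} - \Vert\mathP_\mathJ(f)\Vert_\mathJ|$, together with the monotonicity bound $\Vert f\Vert_{\mathI'\setminus\mathJ} \ge |\Vert\mathP_{\mathI'}(f)\Vert_{\mathI'} - \Vert\mathP_\mathJ(f)\Vert_\mathJ|$ (absorbed into slack), a triangle-inequality collapse of the chain
\begin{equation*}
\coefAbsGen{\mathI}{i} \to \Vert\mathP_\mathI(f)\Vert_\mathI \to \Vert\mathP_\mathJ(f)\Vert_\mathJ \to \coefAbsGen{\mathJ}{j}
\end{equation*}
yields $\Vert f\Vert_{\mathI\setminus\mathK} \ge \tfrac1{16}|\coefAbsGen{\mathI}{i} - \coefAbsGen{\mathJ}{j}|$, where the worst constant $\tfrac18$ gets halved once more to $\tfrac1{16}$ because the middle telescoping term must be split to cover the gap $\Vert\mathP_{\mathI'}\Vert - \Vert\mathP_\mathJ\Vert$ on the annulus $\mathI'\setminus\mathJ$.

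The main obstacle I anticipate is bookkeeping the constants cleanly: one must be careful that the annular regions used are pairwise disjoint and all contained in $\mathI\setminus\mathK$, and that the chain of triangle inequalities linking $\coefAbsGen{\mathI}{i}$ to $\coefAbsGen{\mathJ}{j}$ is supported by genuinely disjoint pieces of $\Vert f\Vert$, so that no part of the norm is double-counted. A minor subtlety is the degenerate case $\mathJ = \mathI'$ (i.e. $\mathJ$ is itself an immediate successor of $\mathI$), in which the middle telescoping term vanishes and the constant actually improves to $\tfrac18$; this only helps, so the stated bound of $\tfrac1{16}$ holds uniformly.
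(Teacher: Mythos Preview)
Your approach is correct and takes a genuinely different route from the paper. The paper builds the entire chain $\mathI=\Delta_0\supset\Delta_1\supset\cdots\supset\Delta_m=\mathK$ of consecutive dyadic levels, applies Lemma~\ref{lem: coefDiff_Neighboor} to each triple $(\Delta_k,\Delta_{k+1},\Delta_{k+2})$, and telescopes through auxiliary coefficients $|c_{\Delta_k}^{(1)}|$; because consecutive triples $\Delta_k\setminus\Delta_{k+2}$ overlap, a factor of $\tfrac12$ is lost on top of the $\tfrac18$ from Lemma~\ref{lem: coefDiff_Neighboor}, giving $\tfrac1{16}$. You instead bypass the intermediate levels entirely: link $|c_\mathI^{(i)}|$ to $\Vert\mathP_\mathI(f)\Vert_\mathI$ via Lemma~\ref{lem: coefDiff_SameLevel} at $(\mathI,\mathI')$, bridge $\Vert\mathP_\mathI(f)\Vert_\mathI$ to $\Vert\mathP_\mathJ(f)\Vert_\mathJ$ via the elementary inequality $\Vert f\Vert_{A\setminus B}\ge\bigl|\,|\int_A f|-|\int_B f|\,\bigr|$ (this is exactly the first inequality of Lemma~\ref{lem: coefDiff_SameLevel}, whose proof nowhere uses the immediate-successor hypothesis, so it holds for any nested dyadic $B\subset A$), and finish with Lemma~\ref{lem: coefDiff_SameLevel} at $(\mathJ,\mathJ')$. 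This is more direct and, if you track the constants carefully, actually delivers $\tfrac18$ rather than $\tfrac1{16}$: the four links in your chain are supported on the three disjoint annuli $\mathI\setminus\mathI'$, $\mathI'\setminus\mathJ$, $\mathJ\setminus\mathJ'$, with only the first annulus doing double duty, so the worst weight is $\tfrac12\cdot\tfrac14=\tfrac18$. Your ``halved once more to $\tfrac1{16}$'' is overly pessimistic. One small omission: you tacitly assume $\mathJ\subsetneq\mathI$ when you pick $\mathI'$, but the statement allows $\mathJ=\mathI$; the paper disposes of that case in one line by appealing directly to Lemma~\ref{lem: coefDiff_SameLevel}.
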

\begin{proof}
If $\mathI = \mathJ$ then the the lemma follows from  Lemma \ref{lem: coefDiff_SameLevel}. Suppose $\mathI\neq\mathJ$. Put $\Delta_0 = \mathI$ and inductively define the chain $\{\Delta_k\}_{k=0}^m$ of dyadic cubes as follows:
\begin{equation*}
\mathK\subset\Delta_{k+1}\subset\Delta_{k},\ \mu(\Delta_{k+1})={\mu(\Delta_{k})\over 2^d} \qquad (0 \le k <m ).
\end{equation*}
Note that $\Delta_m = \mathK$  and that   $\mathJ = \Delta_p$ for some $0<p<s$. So by  Lemma  \ref{lem: coefDiff_Neighboor} we get
\begin{equation*}
\begin{split}
&\Vert f\Vert_{\mathI\setminus\mathK}\geq\Vert f\Vert_{\Delta_0\setminus\Delta_{p+1}} \geq {1\over 2}\sum_{k=0}^{p-1}\Vert f\Vert_{\Delta_{k}\setminus\Delta_{k+2}}\\
&\geq \mid {\coefAbsGen{\mathI}{i} - \coefAbsGen{\Delta_1}{1}\over 16}\mid + \sum_{k=1}^{p-2} \mid {\coefAbsGen{\Delta_k}{1} - \coefAbsGen{\Delta_{k+1}}{1}\over 16}\mid + \mid {\coefAbsGen{\Delta_{p-1}}{1} - \coefAbsGen{\mathJ}{j}\over 16}\mid \\
&\geq \mid {\coefAbsGen{\mathI}{i} - \coefAbsGen{\mathJ}{j}\over 16}\mid.
\end{split}
\end{equation*}
\end{proof}

\section{Collections of Dyadic Cubes and the First Key Lemma}\label{sec: chains}

\begin{definition}\label{def: chain}
Let $\mathJ\subset\mathI$. The \textit{chain} $\chain{\mathI}{\mathJ}$ is the set of dyadic cubes $\mathK$ such that $\mathJ\subseteq\mathK\subseteq\mathI$.
\end{definition}

\begin{definition}\label{def: gen_chain}
A finite set $\mathR\subset\D$ is called a \textit{generalized chain}  if there exists $\mathI_{max}\in\mathR$ such that for any $\mathJ\in\mathR$ one has
\begin{itemize}
\item[a)]\ $\mathJ\subseteq\mathI_{max}$,
\item[b)]\ $\chain{\mathI_{max}}{\mathJ}\subseteq\mathR$.

\end{itemize} The cube $\mathI_{max}\in\mathR$  is called the \textit{maximal cube} of $\mathR$. If $\mathI_{max}\neq [0,1)^d$ then the smallest cube which strictly contains $\mathI_{max}$ is called the  \textit{father of  $\mathR$}, denoted $F(\mathR)$.
\end{definition}

The following lemma is the analogue of  \cite[Lemma~4]{G}. We refer the reader to \cite{G} for the proof.
\begin{lemma}\label{lem: genChain_union}
The union of  two generalized chains $\mathR_1$ and $\mathR_2$ is a generalized chain if and only if either $\mathR_1\cap\mathR_2\neq\emptyset$ or $\father{\mathR_1}\in\mathR_2$ or $\father{\mathR_2}\in\mathR_1$. Then either $\father{\mathR_1\cup\mathR_2} = \father{\mathR_1}$ or $\father{\mathR_1\cup\mathR_2} = \father{\mathR_2}$. 
\end{lemma}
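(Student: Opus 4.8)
The plan is to verify the two implications directly from Definition~\ref{def: gen_chain}, using the basic fact about dyadic cubes that any two of them are either disjoint or one contains the other, and that the cubes containing a fixed cube $\mathK$ form a linearly ordered chain (the ``ancestor chain'' of $\mathK$).

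First I would prove the ``if'' direction in the three cases. \emph{Case $\mathR_1\cap\mathR_2\neq\emptyset$:} pick $\mathK\in\mathR_1\cap\mathR_2$. Then $\mathK\subseteq\mathI_{max}(\mathR_1)$ and $\mathK\subseteq\mathI_{max}(\mathR_2)$, so both $\mathI_{max}(\mathR_j)$ lie in the ancestor chain of $\mathK$ and are therefore nested; say $\mathI_{max}(\mathR_1)\subseteq\mathI_{max}(\mathR_2)$. Set $\mathI_{max}:=\mathI_{max}(\mathR_2)$. For any $\mathJ\in\mathR_1\cup\mathR_2$ we have $\mathJ\subseteq\mathI_{max}$, and I must check $\chain{\mathI_{max}}{\mathJ}\subseteq\mathR_1\cup\mathR_2$. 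If $\mathJ\in\mathR_2$ this is condition b) for $\mathR_2$. If $\mathJ\in\mathR_1$, then $\chain{\mathI_{max}(\mathR_1)}{\mathJ}\subseteq\mathR_1$ by b) for $\mathR_1$, while the remaining cubes of $\chain{\mathI_{max}}{\mathJ}$ are exactly those $\mathL$ with $\mathI_{max}(\mathR_1)\subseteq\mathL\subseteq\mathI_{max}$; since $\mathK\subseteq\mathI_{max}(\mathR_1)\subseteq\mathL$, each such $\mathL$ lies in $\chain{\mathI_{max}(\mathR_2)}{\mathK}\subseteq\mathR_2$ by b) for $\mathR_2$. Hence $\mathR_1\cup\mathR_2$ is a generalized chain with maximal cube $\mathI_{max}(\mathR_2)$, and its father is $F(\mathR_2)$. \emph{Case $\father{\mathR_1}\in\mathR_2$:} then $\father{\mathR_1}\subseteq\mathI_{max}(\mathR_2)$, and since $\mathI_{max}(\mathR_1)\subsetneq\father{\mathR_1}$ we again get the two maximal cubes nested with $\mathI_{max}(\mathR_1)\subseteq\mathI_{max}(\mathR_2)$; moreover $\mathR_1\cap\mathR_2\supseteq\{\father{\mathR_1}\}\cap\mathR_1$ may be empty, so I argue directly: for $\mathJ\in\mathR_1$, $\chain{\mathI_{max}(\mathR_1)}{\mathJ}\subseteq\mathR_1$, and the cubes $\mathL$ with $\mathI_{max}(\mathR_1)\subsetneq\mathL\subseteq\mathI_{max}(\mathR_2)$ all satisfy $\father{\mathR_1}\subseteq\mathL$, hence lie in $\chain{\mathI_{max}(\mathR_2)}{\father{\mathR_1}}\subseteq\mathR_2$ by b) for $\mathR_2$. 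This gives the conclusion with father $F(\mathR_2)$; the symmetric case $\father{\mathR_2}\in\mathR_1$ gives father $F(\mathR_1)$.

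Next I would prove the ``only if'' direction by contraposition: assume $\mathR_1\cap\mathR_2=\emptyset$, $\father{\mathR_1}\notin\mathR_2$, and $\father{\mathR_2}\notin\mathR_1$, and show $\mathR:=\mathR_1\cup\mathR_2$ is not a generalized chain. Suppose for contradiction it were, with maximal cube $\mathI_{max}$. Then $\mathI_{max}\in\mathR_1$ or $\mathI_{max}\in\mathR_2$; say $\mathI_{max}\in\mathR_1$. Since $\mathR_2\neq\emptyset$, pick any $\mathJ\in\mathR_2$; then $\mathJ\subseteq\mathI_{max}$ and $\chain{\mathI_{max}}{\mathJ}\subseteq\mathR$. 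In particular $\mathI_{max}(\mathR_2)\in\chain{\mathI_{max}}{\mathJ}$, so $\mathI_{max}(\mathR_2)\subseteq\mathI_{max}$, and because $\mathR_1\cap\mathR_2=\emptyset$ we must have $\mathI_{max}(\mathR_2)\subsetneq\mathI_{max}$; hence $\father{\mathR_2}$ satisfies $\mathI_{max}(\mathR_2)\subsetneq\father{\mathR_2}\subseteq\mathI_{max}$, so $\father{\mathR_2}\in\chain{\mathI_{max}}{\mathJ}$ after taking $\mathJ=\mathI_{max}(\mathR_2)$. Thus $\father{\mathR_2}\in\mathR=\mathR_1\cup\mathR_2$; it cannot be in $\mathR_2$ (it strictly contains $\mathI_{max}(\mathR_2)$), so $\father{\mathR_2}\in\mathR_1$, contradicting our assumption. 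The case $\mathI_{max}\in\mathR_2$ is symmetric and yields $\father{\mathR_1}\in\mathR_2$.

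The statement about the father of the union then follows from the nesting established in each case of the ``if'' direction: the maximal cube of $\mathR_1\cup\mathR_2$ is whichever of $\mathI_{max}(\mathR_1)$, $\mathI_{max}(\mathR_2)$ is larger (they are comparable), and its father is the corresponding $F(\mathR_j)$. I expect the only delicate point to be bookkeeping in the ``if'' direction — making sure that every cube in $\chain{\mathI_{max}}{\mathJ}$ is accounted for by condition b) of one of the two chains, which is exactly where the hypothesis ($\mathR_1\cap\mathR_2\neq\emptyset$, or one father lying in the other chain) is used to ``bridge'' between the two ancestor chains. Since this is verbatim the argument of \cite[Lemma~4]{G} adapted to $d$-dimensional dyadic cubes (where the only property used is that cubes containing a fixed cube form a chain under inclusion), I would simply refer the reader to \cite{G} rather than reproduce it.
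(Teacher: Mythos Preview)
Your proposal is correct and matches the paper's approach: the paper does not give a proof but simply refers the reader to \cite[Lemma~4]{G}, and you end with exactly the same referral after sketching the (standard) argument. Your sketch is sound---the only property of dyadic cubes used is that the ancestors of a fixed cube are totally ordered by inclusion, which carries over verbatim from $d=1$ to general $d$.
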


Let us recall two more definitions from \cite{G}.
\begin{definition} Let $\mathS$ be a finite subset of $\D$.  We  say that $\{\mathR_1,\ldots,\mathR_k\}$ is the \textit{minimal generalized chain representation} (MGCR) of $\mathP$ if
\begin{itemize}
\item[a)]\ $\mathS = \bigcup_{i=1}^k \mathR_i$,
\item[b)]\ $\mathR_i$ is a generalized chain for all $1\leq i\leq k$,
\item[c)]\ $\mathR_i\cup\mathR_j$ is not a generalized chain for any $1\leq i<j\leq k$.
\end{itemize}
\end{definition}

It is shown in \cite{G} that any set $\mathS$ has a unique  MGCR.

\begin{definition}
Let $\mathS$ be a finite subset of $\D$ and $\mathI\in\mathS$. We  say that $\mathJ\in\mathS$ is a \textit{son} of $\mathI$ with respect to $\mathS$ if $\chain{\mathI}{\mathJ}\cap\mathS = \{\mathI,\mathJ\}$.
\end{definition}

The set of all sons of $\mathI$ with respect to $\mathS$ will be denoted by $\son{\mathI}{\mathS}$. For any $\mathP\subset\mathS$, put
\begin{equation}
\son{\mathP}{\mathS} :=\bigcup_{\mathI\in\mathP} \son{\mathI}{\mathS},
\end{equation}
and
\begin{equation}
\sonOrder{\mathP}{\mathS}{k+1} :=\son{\sonOrder{\mathP}{\mathS}{k}}{\mathS}.
\end{equation}

Finally, let us define the sets
\begin{itemize}
\item $\Lambda_0(\mathS) = \{\mathI\in\mathS\ :\ \son{\mathI}{\mathS}=\emptyset\}$,
\item $\Lambda_1(\mathS) = \{\mathI\in\mathS\ :\ \sizeOfSet{\son{\mathI}{\mathS}}=1\}$,
\item $\Lambda_2(\mathS) = \{\mathI\in\mathS\ :\ \sizeOfSet{\son{\mathI}{\mathS}}\geq 2\}$.
\end{itemize}
By induction on the cardinality of $\mathS$ (see \cite[p. 56]{G}), one has
\begin{equation}\label{eq: compareLambdas}
\sizeOfSet{\Lambda_2(\mathS)}<\sizeOfSet{\Lambda_0(\mathS)}.
\end{equation}

Now we are ready to formulate and prove the first key lemma.
\begin{lemma}  \label{lem: from_q_to_qtilda}
Let $0<t<s<1$, and let $p,q\in\Ld$ be given, with $\spectrum{p}$  finite and $[0,1]^d \notin \SPectrum{p}$.  Let $\{\mathR_1,\ldots\mathR_k\}$ be the MGCR of $\SPectrum{p}$. Suppose
\begin{itemize}
\item[1)] $\spectrum{p}\cap \spectrum{q}=\emptyset$, 
\item[2)]\ for any $\Delta\in\SPectrum{p}$ there exists $1\leq i\leq 2^d-1$ such that $\coefAbs{\Delta}{i}{p}\geq s$,
\item[3)]\ if $\Delta\in \SPectrum{p}\cap\SPectrum{q}$ then $\coefAbs{\Delta}{j}{q} < {t\over s}\coefAbs{\Delta}{i}{p}$ for some $(\Delta,i)\in\spectrum{p}$ and for any $(\Delta,j)\in\spectrum{q}$,
\item[4)]\ for every $1\leq l\leq k$ there exists $\Delta\in\mathR_l$ and $1\leq i \leq 2^d-1$ such that $\coefAbs{\father{\mathR_l}}{j}{q} < s\coefAbs{\Delta}{i}{p}$ for all $1\leq j\leq 2^d-1$. 
\end{itemize}

Then
\begin{equation}\label{eq: normPplusQ}
\Vert p+q\Vert > C(s,t)\sizeOfSet{\mathM},
\end{equation}
where 
\begin{equation*}
\mathM = \Biggl(\bigcup_{l=1}^k \{ \father{\mathR_l}\}\Biggr)\cup \Bigl(\SPectrum{p}\cap\SPectrum{q}\Bigr).
\end{equation*}
Note that  $[0,1)^d\notin \SPectrum{p}$ by assumption, and so $\father{\mathR_l}$ exists for each $1 \le l \le k$.
\end{lemma}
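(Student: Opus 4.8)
The strategy is to reduce the estimate $\|p+q\| > C(s,t)|\mathM|$ to a lower bound on $\|p+q\|$ restricted to a carefully chosen family of pairwise disjoint "collars" around the cubes in $\mathM$, one collar per cube, on each of which $|p+q|$ has integral bounded below by a fixed constant depending only on $s$ and $t$. Summing these local contributions then yields the claim. The main tools will be Lemma~\ref{lem: coefDiff_general} (to control $\|f\|$ on an annulus $\mathI\setminus\mathK$ in terms of differences of Haar coefficients at comparable scales) and Lemma~\ref{lem: estimate_byCoef}, together with the combinatorial bookkeeping of sons/fathers within each generalized chain $\mathR_l$ and the inequality \eqref{eq: compareLambdas}.

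First I would split $\mathM$ into the two pieces $\mathM_1 = \SPectrum{p}\cap\SPectrum{q}$ and $\mathM_2 = \bigcup_l\{\father{\mathR_l}\}$, handling each separately and then combining. For $\Delta\in\mathM_1$: hypothesis (2) gives a coefficient $c_\Delta^{(i)}(p)$ with $|c_\Delta^{(i)}(p)|\ge s$, while hypothesis (3) says every $q$-coefficient on $\Delta$ is smaller by a factor $t/s$; since $\spectrum{p}\cap\spectrum{q}=\emptyset$ (hypothesis (1)), on this cube the function $p+q$ has the Haar coefficient $c_\Delta^{(i)}(p)$ (with nothing from $q$ at index $i$), and the comparison between the $p$- and $q$-coefficients at $\Delta$ forces $|c_\Delta^{(i)}(p+q)| = |c_\Delta^{(i)}(p)|$ to be bounded away from all $q$-coefficients on $\Delta$ by at least $(1-t/s)s = s-t > 0$. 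Applying Lemma~\ref{lem: coefDiff_general} on an annulus $\mathI\setminus\mathK$ with $\Delta$ sitting at the outer scale and a suitable strictly smaller cube $\mathK$ (whose coefficient of $p+q$ is comparably small, e.g.\ $0$ if $\mathK\notin\SPectrum{p}\cup\SPectrum{q}$), we get $\|p+q\|_{\mathI\setminus\mathK}\ge (s-t)/16$. For $\Delta=\father{\mathR_l}\in\mathM_2$: hypothesis (4) furnishes a cube $\Delta'\in\mathR_l$ with $|c_{\Delta'}^{(i)}(p)|$ strictly larger than every $q$-coefficient of the father; since $\father{\mathR_l}$ strictly contains $\mathI_{max}(\mathR_l)\supseteq\Delta'$, Lemma~\ref{lem: coefDiff_general} applied to $\mathI = \father{\mathR_l}$, $\mathJ = \mathI_{max}$, $\mathK=\Delta'$ (or a child of $\Delta'$) bounds $\|p+q\|$ on $\father{\mathR_l}\setminus\mathK$ below by $s|c_{\Delta'}^{(i)}(p)|/16 \ge s^2/16$ using hypothesis (2) again, minus the contribution of $q$ at the father — but hypothesis (4) is precisely designed to absorb that, keeping the difference $\ge (s - s)\cdot(\text{something})$... here I must be more careful and instead estimate $|c_{\father{\mathR_l}}^{(j)}(p+q)| = |c_{\father{\mathR_l}}^{(j)}(q)| < s\,|c_{\Delta'}^{(i)}(p)|$, so the coefficient difference between scale $\Delta'$ and scale $\father{\mathR_l}$ is $\ge (1-s)\cdot s$, giving a local bound $\ge s(1-s)/16$.

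Next comes the disjointness issue, which I expect to be the main obstacle. The annuli produced above need not be disjoint — a father of one chain may lie inside another chain, cubes in $\mathM_1$ may nest, and a single annulus $\mathI\setminus\mathK$ might be charged by several cubes of $\mathM$. To handle this I would pass from $\SPectrum{p}$ to its MGCR and, within each generalized chain $\mathR_l$, use the son/father structure: the cubes of $\mathM$ associated to $\mathR_l$ can be organized so that the annuli $\Delta\setminus(\text{son of }\Delta)$ are disjoint as $\Delta$ ranges over an appropriate subfamily, and the number of such $\Delta$ that we must drop (those with branching, i.e.\ in $\Lambda_2$) is controlled by \eqref{eq: compareLambdas}, $|\Lambda_2(\mathS)| < |\Lambda_0(\mathS)|$, so at most a fixed fraction of $|\mathM|$ is lost. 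Across different chains $\mathR_l,\mathR_{l'}$, the sets are already "almost disjoint" by the defining property of the MGCR (no two chains' union is a generalized chain), and the overlaps can be bounded using Lemma~\ref{lem: genChain_union}; at worst each point of $[0,1)^d$ is covered by a bounded number (depending only on $d$) of the chosen annuli, contributing another fixed constant factor. Assembling: after discarding a controlled fraction of $\mathM$ and accounting for bounded multiplicity, we get
\begin{equation*}
\|p+q\| \;\gtrsim\; \sum_{\Delta\in\mathM'} \|p+q\|_{\text{annulus}(\Delta)} \;\ge\; c(d)\cdot\frac{\min(s-t,\,s(1-s))}{16}\cdot|\mathM|,
\end{equation*}
which is \eqref{eq: normPplusQ} with $C(s,t) = c(d)\min(s-t, s(1-s))/16$ (and one checks $C(s,t)>0$ precisely because $0<t<s<1$). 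The bookkeeping of which son to pick, and proving the chosen annuli have bounded overlap, is where the real work lies; everything else is an application of the coefficient-difference lemmas from Section~\ref{sec: coefficients}.
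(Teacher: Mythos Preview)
Your local estimates are essentially right and match the paper's: for $\Delta\in\SPectrum{p}\cap\SPectrum{q}$ one compares two Haar coefficients of $p+q$ on the \emph{same} cube $\Delta$ (the $p$-index has modulus $\ge s$, the $q$-index has modulus $<(t/s)\cdot s$, difference $\ge s-t$) and applies Lemma~\ref{lem: coefDiff_general} with $\mathI=\mathJ=\Delta$; for a father $\father{\mathR_l}$ one compares a coefficient of $p+q$ on the father (which equals a $q$-coefficient, modulus $<s\,|c^{(i)}_\Delta(p)|$) with the coefficient on $\Delta\in\mathR_l$ (modulus $\ge s$), getting a gap $\ge s(1-s)$. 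So your constant $\min(s-t,s(1-s))/16$ is exactly what the paper obtains.

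The genuine gap is the disjointness step. Your plan applies the $\Lambda_0/\Lambda_1/\Lambda_2$ bookkeeping ``within each generalized chain $\mathR_l$'' of $\SPectrum{p}$ and then asserts bounded overlap across chains. That second assertion is not justified and is in general false: fathers of distinct chains can be nested to arbitrary depth, so annuli built from $\father{\mathR_l}\setminus(\text{cube in }\mathR_l)$ can pile up with unbounded multiplicity. The paper avoids this entirely by running the $\Lambda_0/\Lambda_1/\Lambda_2$ analysis on the set $\mathM$ itself (not on $\SPectrum{p}$, and not chain by chain). It splits into two cases. If $|\Lambda_0(\mathM)|\ge|\mathM|/12$, the minimal cubes of $\mathM$ are pairwise disjoint and each already satisfies $\|p+q\|_{\mathI}\ge s$ by Lemma~\ref{lem: estimate_byCoef} alone---no coefficient-difference lemma is needed here, and this clean case is missing from your sketch. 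Otherwise $|\Lambda_1(\mathM)|>5|\mathM|/6$ by \eqref{eq: compareLambdas}, and one passes to the subset $\tilde\Lambda_1(\mathM)$ of cubes in $\Lambda_1(\mathM)$ whose unique son also lies in $\Lambda_1(\mathM)$; this still has cardinality $>2|\mathM|/3$. For $\mathI\in\tilde\Lambda_1(\mathM)$ with son $\mathJ$ and grandson $\mathK$ (all in $\mathM$), one shows that \emph{either} $\|p+q\|_{\mathI\setminus\mathJ}$ \emph{or} $\|p+q\|_{\mathJ\setminus\mathK}$ is $\ge C_1(s,t)$. The point of restricting to $\tilde\Lambda_1$ is that the collars $\{\mathI\setminus\mathJ\}$ are automatically pairwise disjoint (each $\mathI$ has a unique son), and likewise the collars $\{\mathJ\setminus\mathK\}$; summing separately and averaging gives the bound with no overlap issue at all. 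A further detail you glossed over in the father--father subcase: one needs that the cube $\Delta\in\mathR_{l_1}$ from condition~4) satisfies $\Delta\not\subseteq\mathJ$ when $\mathJ=\father{\mathR_{l_2}}$, which the paper extracts from the MGCR disjointness via Lemma~\ref{lem: genChain_union}.
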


\begin{proof}
We will proof the lemma for  $d=2$ (the extension to $d \ge 3$ is routine but the notation is more cumbersome). We consider two cases.

CASE 1. $\sizeOfSet{\Lambda_0(\mathM)}\geq {\sizeOfSet{\mathM}\over 12}$. Let $\mathI\in\Lambda_0(\mathM)$.  Then  either  a) $\mathI\in\SPectrum{p}$ or b) $\mathI=\father{\mathR_l}$ for some $1\leq l\leq k$.

a) If $\mathI\in\SPectrum{p}$ then combining 
Lemma \ref{lem: estimate_byCoef} and  condition 2) of this lemma, we conclude that
\begin{equation}\label{eq: normForDisjoint}
\Vert p+q\Vert_{\mathI}\geq s.
\end{equation}

b) If $\mathI = \father{\mathR_l}$ then  the maximal cube of $\mathR_l$  belongs to $\SPectrum{p}$ and therefore satisfies  condition 2) of the Lemma. By  the same argument as in the case a) we conclude that \eqref{eq: normForDisjoint} holds for  $\mathI$. 

Note that  the cubes $\mathI\in\Lambda_0(\mathM)$ are disjoint, so
\begin{equation*}
\Vert p+q\Vert\geq {s\over 12} |\mathM|.
\end{equation*}
CASE 2. $\sizeOfSet{\Lambda_0(\mathM)} < {\sizeOfSet{\mathM}\over 12}$. Taking into account \eqref{eq: compareLambdas} and the fact that $\Lambda_0(\mathM)$ is always nonempty, one has
\begin{equation}\label{eq: Lambda1_isBig}
\sizeOfSet{\Lambda_1(\mathM)} > {5\sizeOfSet{\mathM}\over 6}+1\ \hbox{and}\ \sizeOfSet{\mathM}>12.
\end{equation}
For $\mathI \in \Lambda_1(\mathM)$, let $\mathJ$ denote the unique son of $\mathI$. Let us define
\begin{equation}\label{eq: Lambda1_tilda}
\tilde{\Lambda}_1(\mathM) =\{\mathI\in\Lambda_1(\mathM)\ :\ \mathJ \in\Lambda_1(\mathM)\}.
\end{equation}
Note that
\begin{equation} \label{eq: Lambda1_TildaIsBig}\begin{split}
|\tilde\Lambda_1(\mathM)| &\ge |\Lambda_1(\mathM)| - |\Lambda_0(\mathM)| - |\Lambda_2(\mathM)|\\
& > (\frac{5 |\mathM|}{6} + 1) - \frac{|\mathM|}{12} - \frac{|\mathM|}{12}\\
&= \frac{2 |\mathM|}{3} + 1. \end{split} \end{equation}

For $\mathI\in\tilde{\Lambda}_1(\mathM)$, whose unique son (we recall) is denoted by $\mathJ$, let $\mathK$ denote the unique son of $\mathJ$. Let us prove that either 

\begin{equation}\label{eq: P_plus_Q_on_IJ}
\Vert p+q\Vert_{\mathI\setminus\mathJ} \geq C_1(s,t)
\end{equation} 
or
\begin{equation}\label{eq: P_plus_Q_on_JK}
\Vert p+q\Vert_{\mathJ\setminus\mathK} \geq C_1(s,t).
\end{equation}

If $\mathI\in\SPectrum{p}\cap\SPectrum{q}$ then by conditions  2) and 3) of the lemma, we have $\coefAbs{\mathI}{i}{p}\geq s$ and $\coefAbs{\mathI}{j}{q}< \frac{t}{s}\coefAbs{\mathI}{i}{p}$ 
for some $i$ and $j$. Therefore, according to Lemma \ref{lem: coefDiff_general}, we have
\begin{equation*}
\Vert p+q\Vert_{\mathI\setminus\mathJ} \geq {s-t\over 16}.
\end{equation*}

Using the same argument for the case when  $\mathJ\in\SPectrum{p}\cap\SPectrum{q}$, we have
\begin{equation*}
\Vert p+q\Vert_{\mathJ\setminus\mathK} \geq {s-t\over 16}.
\end{equation*}
It remains to consider the case when $\mathI$, $\mathJ\notin\SPectrum{p}\cap\SPectrum{q}$. Then we have
\begin{equation*}
\mathI = \father{\mathR_{l_1}}\ \hbox{and}\ \mathJ = \father{\mathR_{l_2}},
\end{equation*}
for some $1\leq l_1,l_2\leq k$. Let $\Delta$ and $i$ be chosen according to conditions 2) and  4) of the Lemma for the generalized chain $\mathR_{l_1}$. Then we have that
\begin{equation}\label{eq: existsWithCoef1}
\coefAbs{\Delta}{i}{p}\geq s, 
\end{equation}
and
\begin{equation}\label{eq: existsFatherSmall}
\coefAbs{\mathI}{1}{q}< s\coefAbs{\Delta}{i}{p}. 
\end{equation}

By Lemma \ref{lem: genChain_union} $\mathR_{l_1}$ and $\mathR_{l_2}$ are disjoint, so  $\mathJ\notin\mathR_{l_1}$ and hence $\Delta\not\subseteq\mathJ$ by the definition of  generalized chain. In the case when $\Delta\cap\mathJ = \emptyset$ we get 
\begin{equation*}
\Vert p+q\Vert_{\mathI\setminus\mathJ} \geq s
\end{equation*}
directly from \eqref{eq: existsWithCoef1} and Lemma \ref{lem: estimate_byCoef}. Finally, for the case $\mathJ\subsetneq\Delta$ we apply \eqref{eq: existsWithCoef1}, \eqref{eq: existsFatherSmall} and Lemma \ref{lem: coefDiff_general} to conclude that 
\begin{equation*}
\Vert p+q\Vert_{\mathI\setminus\mathJ}\geq {s(1-s)\over 16}.
\end{equation*}

So, for every $\mathI\in\tilde\Lambda_1(\mathM)$,  either \eqref{eq: P_plus_Q_on_IJ} or \eqref{eq: P_plus_Q_on_JK} holds. Recall that  $\mathJ$ and $\mathK$ depend on $\mathI$. It is to easily seen  that  the sets $\mathI\setminus\mathJ$ as $\mathI$ ranges over $\tilde \Lambda_1(\mathM)$ are disjoint. The same is true for  the sets $\mathJ\setminus\mathK$ as  $\mathI$ ranges over $\tilde \Lambda_1(\mathM)$. Therefore
\begin{equation*}
\Vert p+q\Vert \geq \sum_{\mathI\in\tilde\Lambda_1(\mathM)}  \Vert p+q\Vert_{\mathI\setminus\mathJ},
\end{equation*}
and
\begin{equation*}
\Vert p+q\Vert \geq \sum_{\mathI\in\tilde\Lambda_1(\mathM)}  \Vert p+q\Vert_{\mathJ\setminus\mathK}.
\end{equation*}
Using \eqref{eq: Lambda1_TildaIsBig} we conclude that
\begin{equation*}
\Vert p+q\Vert\geq {1\over 2}\sum_{\mathI\in\tilde\Lambda_1(\mathM)}\Bigl( \Vert p+q\Vert_{\mathI\setminus\mathJ} +  \Vert p+q\Vert_{\mathJ\setminus\mathK}\Bigr)\geq C(s,t)\sizeOfSet{\mathM}.
\end{equation*}

\end{proof}

\section{Symmetrization Properties}\label{sec: symmetry}

We will prove the results in this section only for $d=2$, but the extension to $d \ge 3$ is routine. 

 Let $\Delta=[a,a+2\delta)\times [b,b+2\delta)$ be a dyadic square of side length $2\delta$, and let $\Delta_i$, $1\leq i\leq 4$, be the four disjoint 
immediate successor squares of  $\Delta$  of side length $\delta$ . For each $1 \le i \le 4$, let us denote by $L_i(f,\Delta)$ the function which agrees with $f$ on the sets $[0,1)^2\setminus\Delta$ and  $\Delta_i$, 
and which

 `copies' $f$ from the square $\Delta_i$ to the other three squares $\Delta_j,\ j\neq i$. More precisely, let $\Delta_j = u_j + \Delta_i$, where $u_j \in \mathbb{R}^2$. Then 
$$ L_i(f, \Delta)(x) = f(x - u_j) \qquad\text{for $1 \le j \le 4$ and for all $x \in \Delta_j$}.$$ 

\begin{lemma}\label{lem: basic_symmetry}

 let $f,g\in\LtwoDim$,  let $\SPectrum{f+g}\neq\emptyset$, and let
\begin{equation}\label{eq: def_constantB}
B := {\Vert f\Vert\over \Vert f+g\Vert}.
\end{equation}

Then (for any square $\Delta$) we have either

\begin{equation}\label{eq: symmetry_greatForSomeI}
\Vert L_i(f,\Delta)\Vert > B\Vert L_i(f+g,\Delta)\Vert,\ \hbox{for some}\ 1\leq i\leq 4,
\end{equation}
or
\begin{equation}\label{eq: symmetry_equalForAllI}
\Vert L_i(f,\Delta)\Vert = B\Vert L_i(f+g,\Delta)\Vert,\ \hbox{for all}\ 1\leq i\leq 4.
\end{equation}
\end{lemma}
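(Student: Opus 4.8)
The plan is to reduce everything to a single observation: the operator $L_i(\cdot,\Delta)$ is linear in its first argument, and the $L_1$-norm of $L_i(u,\Delta)$ can be computed block by block on the four squares $\Delta_1,\dots,\Delta_4$ together with the complement $[0,1)^2\setminus\Delta$. Concretely, since $L_i(u,\Delta)$ agrees with $u$ outside $\Delta$ and is a translated copy of $u|_{\Delta_i}$ on each $\Delta_j$, one has $\|L_i(u,\Delta)\| = \|u\|_{[0,1)^2\setminus\Delta} + 4\|u\|_{\Delta_i}$ for any $u \in \LtwoDim$. Apply this with $u = f$ and with $u = f+g$, and abbreviate $a := \|f\|_{[0,1)^2\setminus\Delta}$, $b := \|f+g\|_{[0,1)^2\setminus\Delta}$, $p_i := \|f\|_{\Delta_i}$, $q_i := \|f+g\|_{\Delta_i}$. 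The desired dichotomy then becomes: for every $i$, either $a + 4p_i > B(b + 4q_i)$, or for all $i$ one has equality $a + 4p_i = B(b+4q_i)$.

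Next I would exploit the definition $B = \|f\|/\|f+g\|$, which gives $\|f\| = B\|f+g\|$, i.e. $a + \sum_{i=1}^4 p_i = B\bigl(b + \sum_{i=1}^4 q_i\bigr)$. Set $e_i := (a + 4p_i) - B(b + 4q_i)$. Summing over $i = 1,\dots,4$ yields $\sum_i e_i = 4a + 4\sum p_i - B(4b + 4\sum q_i) = 4\bigl(\|f\| - B\|f+g\|\bigr) = 0$. So the four numbers $e_1,\dots,e_4$ sum to zero. If they are not all zero, then at least one of them is strictly positive (since a set of reals summing to zero, not all zero, must contain a strictly positive member), which is exactly \eqref{eq: symmetry_greatForSomeI}; if they are all zero, that is exactly \eqref{eq: symmetry_equalForAllI}. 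This needs the hypothesis $\SPectrum{f+g} \neq \emptyset$ only to guarantee $\|f+g\| > 0$, so that $B$ is well defined and the denominator in the normalization identity is nonzero.

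The only genuine point requiring care — and the step I expect to be the main (minor) obstacle — is verifying the block decomposition $\|L_i(u,\Delta)\| = \|u\|_{[0,1)^2\setminus\Delta} + 4\|u\|_{\Delta_i}$ rigorously: each $\Delta_j$ has Lebesgue measure equal to that of $\Delta_i$, and the change of variables $x \mapsto x - u_j$ is measure-preserving, so $\int_{\Delta_j} |L_i(u,\Delta)| = \int_{\Delta_j} |u(x-u_j)|\,dx = \int_{\Delta_i}|u|$ for each of the four values of $j$ (including $j=i$), giving the factor $4$; and on the complement $L_i(u,\Delta) = u$. Once this identity is in hand, the rest is the elementary "reals summing to zero" argument above, and no further input from the Haar structure is needed. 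I would present the argument in exactly this order: (i) state and prove the block-norm identity for a general $u$; (ii) introduce $a,b,p_i,q_i$ and rewrite both alternatives in these terms; (iii) use $B\|f+g\| = \|f\|$ to show $\sum_i e_i = 0$; (iv) conclude by the sign argument.
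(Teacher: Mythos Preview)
Your proposal is correct and is essentially the same argument as the paper's: both rest on the block-norm identity for $L_i(u,\Delta)$ (your form $\|u\|_{[0,1)^2\setminus\Delta}+4\|u\|_{\Delta_i}$ is identical to the paper's $\|u\|+4\|u\|_{\Delta_i}-\sum_j\|u\|_{\Delta_j}$) and then on summing over $i$ to force $\sum_i e_i=0$. The paper phrases the last step as a proof by contradiction, whereas you argue directly that four reals summing to zero are either all zero or contain a positive one; these are the same idea.
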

\begin{proof}
Let us assume that the statement of the lemma is not correct. Then we have 
\begin{equation}\label{eq: symmetry_smallForSum}
\sum_{i=1}^4\Vert L_i(f,\Delta)\Vert < B\sum_{i=1}^4\Vert L_i(f+g,\Delta)\Vert.
\end{equation}

Note that
\begin{equation*}
\Vert L_i(f,\Delta)\Vert = \Vert f\Vert + 4\Vert f\Vert_{\Delta_i} - \sum_{j=1}^4\Vert f\Vert_{\Delta_j}
\end{equation*}
and
\begin{equation*}
\Vert L_i(f+g,\Delta)\Vert = \Vert f+g\Vert + 4\Vert f+g\Vert_{\Delta_i} - \sum_{j=1}^4\Vert f+g\Vert_{\Delta_j}.
\end{equation*}
By substituting the last two inequalities into \eqref{eq: symmetry_smallForSum} we conclude that,
\begin{equation*}
4\Vert f\Vert < 4B\Vert f+g\Vert,
\end{equation*}
which is contradiction. 
\end{proof}
\begin{remark}
If $f \neq 0$ then $L_j(f,\Delta)\neq 0$  for some $1\leq j\leq 4$.
\end{remark}

For $f,g\in\LtwoDim$, with $f+g\neq 0$, and $\Delta\in\D^2$, define
\begin{equation}\label{eq: operator_L}
L((f,g),\Delta):=\begin{cases} (L_i(f,\Delta),L_i(g,\Delta)),\ \hbox{if}\ \eqref{eq: symmetry_greatForSomeI}\ \hbox{holds and}\ \Vert L_i(f+g,\Delta)\Vert>0\\
(L_j(f,\Delta),L_j(g,\Delta)), \hbox{where } \Vert L_j(f+g,\Delta)\Vert>0\hbox{ otherwise}.
\end{cases}
\end{equation}

\begin{lemma}\label{lem: symmetry_main}
Let $f,g\in\LtwoDim$ and $\Delta\in\D^2$ satisfy

\begin{itemize}
\item[i)]\ $\Delta\notin\SPectrum{f}$,\ $\Delta\notin\SPectrum{g}$,
\item[ii)] $\spectrum{f}\cap\spectrum{g}=\emptyset$,
\item[iii)] $\spectrum{f+g}\neq\emptyset$.
\end{itemize}\end{lemma}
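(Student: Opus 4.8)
The statement I expect to establish is that the pair $(\hat f,\hat g):=L((f,g),\Delta)$ again satisfies i), ii), iii), and moreover $\|\hat f\|/\|\hat f+\hat g\|\ge\|f\|/\|f+g\|$; this is exactly what is needed to iterate $L$ in the inductive argument of Section~6. The plan is to pin down precisely how the copy operator $L_i$ acts on Haar coefficients, deduce i) and ii) from that, and obtain the norm inequality directly from the construction of $L$ via Lemma~\ref{lem: basic_symmetry}.

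First I would observe that hypothesis i) forces $\int_{\Delta_i}f=\frac{1}{4}\int_\Delta f$ for each immediate successor $\Delta_i$ of $\Delta$ (and likewise for $g$ and for $f+g$): the only dyadic cube strictly between $\Delta_i$ and $\Delta$ is $\Delta$ itself, and $\coefAt{\Delta}{k}{f}=0$ for $k=1,2,3$ by i), so $\mathP_{\Delta_i}f$ and $\mathP_\Delta f$ agree on $\Delta_i$ as constants, their common value being $\mu(\Delta)^{-1}\int_\Delta f=\mu(\Delta_i)^{-1}\int_{\Delta_i}f$. Since $L_i$ leaves $f$ unchanged on $[0,1)^2\setminus\Delta$ and on $\Delta_i$, and replaces $f|_{\Delta_j}$ by a measure-preserving translate of $f|_{\Delta_i}$ for $j\ne i$, it follows that $L_i$ preserves $\int_\Delta f$. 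From this one reads off the effect on coefficients: $\coefAt{\mathK}{k}{\hat f}=\coefAt{\mathK}{k}{f}$ whenever $\mathK\cap\Delta=\emptyset$, or $\mathK\subseteq\Delta_i$, or $\Delta\subsetneq\mathK$ (the last case because $\haar{\mathK}{k}$ is then constant on $\Delta$ and $\int_\Delta\hat f=\int_\Delta f$); $\coefAt{\mathK}{k}{\hat f}=\coefAt{\mathK'}{k}{f}$ when $\mathK\subseteq\Delta_j$ with $j\ne i$, where $\mathK'=\mathK-u_j\subseteq\Delta_i$ is the corresponding cube; and $\coefAt{\Delta}{k}{\hat f}=0$ for $k=1,2,3$, since $\hat f$ has the same integral over each of $\Delta_1,\dots,\Delta_4$ while the four values of $\haar{\Delta}{k}$ on those squares sum to zero. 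The identical statements hold for $\hat g$ and, by linearity of $L_i$, for $\hat f+\hat g=L_i(f+g,\Delta)$.

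Granting this, i) and ii) are immediate for $(\hat f,\hat g)$: i) is the vanishing of $\coefAt{\Delta}{k}{\hat f}$ and $\coefAt{\Delta}{k}{\hat g}$ just noted, and for ii) the pair $(\coefAt{\mathK}{k}{\hat f},\coefAt{\mathK}{k}{\hat g})$ is, for every $\mathK$ and $k$, either $(0,0)$ (when $\mathK=\Delta$) or --- after the translation $\mathK\mapsto\mathK-u_j$ if necessary --- a pair of coefficients of $f$ and $g$ at one common cube, which has at most one nonzero entry by ii). The norm inequality follows straight from Lemma~\ref{lem: basic_symmetry} together with the definition \eqref{eq: operator_L}: if \eqref{eq: symmetry_greatForSomeI} holds, then $L$ picks an $i$ with $\|L_i(f,\Delta)\|>B\|L_i(f+g,\Delta)\|$ and $\|L_i(f+g,\Delta)\|>0$, so $\|\hat f\|>B\|\hat f+\hat g\|$; otherwise \eqref{eq: symmetry_equalForAllI} holds and $L$ picks some $j$ with $\|L_j(f+g,\Delta)\|>0$ (such $j$ exists by the Remark after Lemma~\ref{lem: basic_symmetry}, applied to $f+g\ne0$), so $\|\hat f\|=B\|\hat f+\hat g\|$. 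In either case $\|\hat f\|\ge B\|\hat f+\hat g\|$ with $B=\|f\|/\|f+g\|$, and in particular $\hat f+\hat g\ne0$.

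The one genuinely delicate point is iii), i.e. $\spectrum{\hat f+\hat g}\ne\emptyset$. If $f+g$ has a nonzero Haar coefficient at a cube that is not strictly contained in $\Delta$, the coefficient analysis above shows that it survives, and $(\hat f,\hat g)$ satisfies iii). In the remaining case every nonzero coefficient of $f+g$ lies inside one of the squares $\Delta_j$, and one must rule out the possibility that $L$ selects an index $i$ on which $f+g$ is already constant; given that, $\hat f+\hat g=L_i(f+g,\Delta)$ inherits a nonzero coefficient from a cube contained in $\Delta_i$. Carrying this out requires exploiting the precise form of \eqref{eq: operator_L} --- the requirement $\|L_i(f+g,\Delta)\|>0$ together with the strict inequality \eqref{eq: symmetry_greatForSomeI} --- and I expect it to be the main obstacle in turning this plan into a complete proof.
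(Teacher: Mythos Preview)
Your plan and the coefficient analysis match the paper's proof; statements 1)--4) are obtained in the same way. There is, however, a gap in your argument for the ratio inequality (statement~6)). You assert that if \eqref{eq: symmetry_greatForSomeI} holds then $L$ takes the first branch of \eqref{eq: operator_L}, and that ``otherwise \eqref{eq: symmetry_equalForAllI} holds''. But the first branch also requires $\|L_i(f+g,\Delta)\|>0$, so you must exclude the case in which \eqref{eq: symmetry_greatForSomeI} holds for some $i$ with $L_i(f+g,\Delta)=0$; in that situation the second branch of \eqref{eq: operator_L} is taken, yet \eqref{eq: symmetry_equalForAllI} is not available to you. The paper closes this gap (its Case~2) using precisely the disjoint-spectra fact you have already established: if $L_i(f+g,\Delta)=0$ then $L_i(f,\Delta)=-L_i(g,\Delta)$, so their spectra coincide; by disjointness both spectra are empty, whence $L_i(f,\Delta)=L_i(g,\Delta)=0$, contradicting the strict inequality in \eqref{eq: symmetry_greatForSomeI}.

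As for statement 5), which you single out as the main obstacle: the paper dispatches it in one line, observing that both branches of \eqref{eq: operator_L} explicitly select an index with $\|L_\cdot(f+g,\Delta)\|>0$, so $f'+g'\ne0$. You are correct that this is literally weaker than $\spectrum{f'+g'}\ne\emptyset$, but in the iteration carried out in Section~6 all that is actually used is $\|f'+g'\|>0$ (to keep the ratio well-defined and to reapply Lemma~\ref{lem: basic_symmetry} at the next step). The scenario you worry about---$L$ selecting an $i$ on which $f+g$ happens to be constant---therefore causes no trouble, and the Case~2 argument above is the real missing ingredient, not a separate analysis of statement~5).
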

Further, let $(f',g'):=L((f,g),\Delta)=L_i((f,g),\Delta)$ for some $1 \le i \le 4$. Then,
\begin{itemize}
\item[1)]\ $\Delta\notin\SPectrum{f'}$,\ $\Delta\notin\SPectrum{g'}$,
\item[2)] $\coefAt{\mathI}{j}{f'}=\coefAt{\mathI}{j}{f}$ and $\coefAt{\mathI}{j}{g'}=\coefAt{\mathI}{j}{g}$ for all $\mathI$ with $\mathI\not\subset\Delta\setminus\Delta_i$,
\item[3)] for every $\mathI\subset\Delta\setminus\Delta_i$ there exists $\mathJ\in\Delta_i$ such that 
$\coefAt{\mathI}{j}{f'}=\coefAt{\mathJ}{j}{f}$ and $\coefAt{\mathI}{j}{g'}=\coefAt{\mathJ}{j}{g}$ for all $1\leq j\leq 4$,
\item[4)] $\spectrum{f'}\cap\spectrum{g'}=\emptyset$,
\item[5)] $\spectrum{f'+g'}\neq\emptyset$,
\item[6)]\ ${\Vert f'\Vert\over\Vert f'+g'\Vert}\geq{\Vert f\Vert\over\Vert f+g\Vert}$.
\end{itemize}
\begin{proof} From condition i) of the lemma and  \eqref{eq: haarcoef} it follows that 
\begin{equation} \label{eq: ints}
\int_{\Delta_j}f = \int_{\Delta_k}f\ \hbox{and}\ \int_{\Delta_j}g = \int_{\Delta_k}g\ \hbox{for all}\ 1\leq j,k\leq 4. 
\end{equation}
The functions $f'$ and $g'$ (replacing $f$ and $g$) will also satisfy \eqref{eq: ints}, which gives statement 1). Clearly,  $\int_{\Delta_i}f = \int_{\Delta_i}f'$ and $\int_{\Delta_i}g = \int_{\Delta_i}g'$, and hence
\begin{equation}\label{eq: integralEqual}
\int_\Delta f = \int_\Delta f'\quad \hbox{and}\quad \int_\Delta g = \int_\Delta g'.
\end{equation}  
If $\mathI\not\subset\Delta\setminus\Delta_i$ then either $\mathI\cap\biggl(\Delta\setminus\Delta_i\biggr)=\emptyset$ or $\mathI=\Delta$ or $\mathI\supsetneq\Delta$. In the first case we have $f=f'$ and $g=g'$ on $\mathI$ and therefore we have statement 2). The second case is equivalent to  statement 1). In the last case it is easy to check that $\coefAt{\mathI}{j}{f'}$ and $\coefAt{\mathI}{j}{g'}$ depend only on the average  values $\int_\Delta f'$ and $\int_\Delta g'$ on $\Delta$, which implies, taking \eqref{eq: integralEqual} into account,   that $\coefAt{\mathI}{j}{f'}=\coefAt{\mathI}{j}{f}$ and $\coefAt{\mathI}{j}{g'}=\coefAt{\mathI}{j}{g}$. Thus, the proof of statement 2) is complete.
Statement 3) is obvious and follows from the definition of the operator $L$.

Statement 4) follows from the condition ii) and statements 2) and 3). Statement 5) follows immediately from the definition of $(L(f,g),\Delta)$.

Finally, let us prove  statement 6). The statement is obvious if $f'$ and $g'$ are defined according to the first line of \eqref{eq: operator_L}. Let us consider the case when they are defined according to the second line. We have two cases.

CASE 1. Suppose \eqref{eq: symmetry_greatForSomeI} fails. Then, by  Lemma~\ref{lem: basic_symmetry},  the set of equalities \eqref{eq: symmetry_equalForAllI} holds, which implies that we have equality in  statement 6).

CASE 2. Now suppose \eqref{eq: symmetry_greatForSomeI} holds, but that $L_i(f+g,\Delta)=0$. Since $\spectrum{L_i(f)}\cap\spectrum{L_i(g)}=\emptyset$ we  conclude that $L_i(f)=L_i(g)=0$. So  inequality in  \eqref{eq: symmetry_greatForSomeI} is impossible, which contradicts our assumption.
\end{proof}
\section{The Second Key Lemma}

\begin{lemma}\label{lem: symmetry_main1}
Let $f, g\in\Ld$, with $\spectrum{f}$ finite, and let $\{\mathR_1,\ldots,\mathR_k\}$ be the MGCR of $\SPectrum{f}$.  Suppose that
\begin{itemize}
\item[1)]\ $\SPectrum{f}\cap\SPectrum{g}=\emptyset$,
\item[2)]\ $[0,1)^d\notin \SPectrum{f}$ and $[0,1)^d\notin \SPectrum{g}$,
\item[3)]\ $\father{\mathR_l}\notin\SPectrum{g}$ for all $1\leq l\leq k$,
\item[4)]\ $\mid\coefAt{\mathI}{j}{g}\mid\leq 1$ for all $\mathI\in\D$ and $1\leq j\leq 2^d-1$,
\item[5)]\ for every $1\leq l\leq k$ there exist $\mathI\in\mathR_l$ and $1\leq j\leq 2^d-1$ such that $\mid\coefAt{\mathI}{j}{f}\mid\geq t$.
\end{itemize}
Then
\begin{equation}
{\Vert f\Vert\over \Vert f+g\Vert}\leq C(t).
\end{equation}
\end{lemma}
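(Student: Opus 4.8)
The plan is to reduce Lemma~\ref{lem: symmetry_main1} to the first key lemma, Lemma~\ref{lem: from_q_to_qtilda}, by means of the symmetrization machinery of Section~\ref{sec: symmetry}. The ratio $\Vert f\Vert/\Vert f+g\Vert$ that we must bound is precisely the quantity $B$ appearing in Lemma~\ref{lem: basic_symmetry}, and statement~6) of Lemma~\ref{lem: symmetry_main} tells us that applying the operator $L((\cdot,\cdot),\Delta)$ never decreases this ratio. So the strategy is to transform the pair $(f,g)$ by a finite sequence of such operators, producing a new pair $(\tilde f,\tilde g)$ with the \emph{same or larger} ratio, but for which the spectral configuration is simple enough that we can invoke Lemma~\ref{lem: from_q_to_qtilda} directly (with $p=\tilde f$, $q=\tilde g$), giving $\Vert \tilde f+\tilde g\Vert > C(s,t)\sizeOfSet{\mathM}$ for the relevant set $\mathM$, hence an upper bound $C(t)$ on the ratio.

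\textbf{Step 1: Normalization and setup.} First I would observe that conditions 4) and 5) of Lemma~\ref{lem: symmetry_main1} are not quite the hypotheses 2)--4) of Lemma~\ref{lem: from_q_to_qtilda}: there we need every cube in $\SPectrum{p}$ (not just one per generalized chain) to carry a coefficient $\geq s$. The fix is to note that $f$ has finite spectrum and the $\prec$-ordering lets us peel off, from each generalized chain $\mathR_l$, the \emph{sub}-collection of cubes whose maximal coefficient is $\geq s$; the cubes with all coefficients $<s$ contribute a controlled amount to $\Vert f\Vert$ (using Lemma~\ref{lem: estimate_forQ} applied chain-by-chain, after rescaling so that the threshold-$t$ coefficient is normalized to $1$). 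Concretely, after multiplying $f$ and $g$ by a common scalar we may assume the coefficient in hypothesis 5) equals $1$, so $\sizeOfSet{\coefAbsGen{\mathI}{j}}$ of $g$ is $\leq 1$ everywhere and $\geq t$ somewhere in each chain for $f$; then $\Vert g\Vert\leq$ (some constant times the number of cubes), and the part of $f$ below the $s$-threshold is likewise absorbed, so it suffices to estimate $\Vert f'+g'\Vert$ from below by $C\sizeOfSet{\SPectrum{f'}}$ for the ``large'' part $f'$ of $f$.

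\textbf{Step 2: Symmetrization to kill the overlap.} The key difficulty Lemma~\ref{lem: from_q_to_qtilda} handles is that $g$ may be large on the fathers $\father{\mathR_l}$ and on cubes of $\SPectrum{g}$ that nest inside $\SPectrum{f}$. Here conditions 1) and 3) of Lemma~\ref{lem: symmetry_main1} already guarantee $\SPectrum{f}\cap\SPectrum{g}=\emptyset$ and $\father{\mathR_l}\notin\SPectrum{g}$, so hypotheses 1) and 3) of Lemma~\ref{lem: from_q_to_qtilda} are automatic and $\mathM=\bigcup_l\{\father{\mathR_l}\}$. What we still must arrange is hypothesis 4): for each chain $\mathR_l$, the coefficients of $g$ on $\father{\mathR_l}$ must be $<s\coefAbsGen{\Delta}{i}$ for some $\Delta\in\mathR_l$ with a large $p$-coefficient. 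This is where I apply $L((f,g),\father{\mathR_l})$ (and possibly further applications at descendants): since $\father{\mathR_l}\notin\SPectrum{f}\cup\SPectrum{g}$, Lemma~\ref{lem: symmetry_main} applies with $\Delta=\father{\mathR_l}$, the ratio does not decrease (statement 6)), the $f$- and $g$-coefficients on cubes not strictly inside $\father{\mathR_l}\setminus\Delta_i$ are unchanged (statement 2)), and crucially we may choose among the four immediate successors the one $\Delta_i$ on which $g$ is ``smallest''; copying it around makes $\int_{\father{\mathR_l}}g$, and hence the coefficients of $g$ at cubes strictly containing $\father{\mathR_l}$, as small as we like relative to the copy of $f$ that survives. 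One must do this carefully, processing fathers from the smallest upward so that later symmetrizations don't disturb the arrangement at smaller scales; after finitely many steps the transformed pair satisfies all four hypotheses of Lemma~\ref{lem: from_q_to_qtilda}.

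\textbf{Step 3: Conclude.} Apply Lemma~\ref{lem: from_q_to_qtilda} to the transformed pair to get $\Vert f'+g'\Vert_{\mathrm{transformed}} \geq C(s,t)\,k$ (with $k$ the number of generalized chains, since $\mathM$ now has $k$ elements), combine with the bound $\Vert g\Vert\lesssim(\text{number of cubes})\lesssim C\cdot k$ from Step 1 and the triangle inequality to get $\Vert f'\Vert \leq C(s,t)\Vert f'+g'\Vert$, restore the ``small'' part of $f$ and undo the normalization, and use statement 6) to transfer the inequality back to the original pair. The main obstacle I anticipate is Step 2: one must verify that a \emph{finite} sequence of symmetrizations suffices and that processing chains in the right order leaves all previously-arranged smallness intact — this bookkeeping, which is exactly the ``induction argument'' alluded to in the introduction, is the technical heart of the proof, whereas Steps 1 and 3 are essentially routine once the combinatorics of generalized chains from Section~\ref{sec: chains} is in hand.
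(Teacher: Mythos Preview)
Your plan rests on a misreading of how the two key lemmas interact. Lemma~\ref{lem: from_q_to_qtilda} yields only a lower bound $\Vert p+q\Vert > C(s,t)\sizeOfSet{\mathM}$, and in your Step~3 you try to convert this into a ratio bound via the triangle inequality $\Vert f'\Vert\le \Vert f'+g'\Vert+\Vert g'\Vert$ together with an estimate $\Vert g'\Vert\lesssim k$. But there is no such estimate: $g$ has coefficients bounded by $1$, but its spectrum is \emph{not} assumed finite, so $\Vert g\Vert$ (and $\Vert g'\Vert$) need bear no relation to $k$. Likewise, nothing in the hypotheses bounds the coefficients of $f$ from above, so the ``peeling off small coefficients'' in Step~1 has no leverage. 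In the Main Theorem proof the two key lemmas play separate and complementary roles: Lemma~\ref{lem: from_q_to_qtilda} is applied to the \emph{raw} pair $(p,q)$ to control the cost of deleting the $\mathM$-coefficients from $q$, and only then is Lemma~\ref{lem: symmetry_main1} applied to the modified pair $(p,\tilde q)$. The second lemma is not proved by reducing to the first.

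What the paper actually does after symmetrization is quite different from what you propose. The point of applying $L$ at each father $\father{\mathR_l}$ (processed from smallest to largest) is not to arrange hypothesis~4) of Lemma~\ref{lem: from_q_to_qtilda} --- which, as you yourself note, is already vacuously true here since $\father{\mathR_l}\notin\SPectrum{g}$ --- but to force the symmetry property (called P6 in the paper): $f'$ and $g'$ look the same on all $2^d$ immediate successors of every father in the new MGCR. This symmetry, combined with Lemma~\ref{lem: estimate_forQ} (which gives $\Vert\mathP_\mathI g'\Vert_\mathI\le 1$ from the coefficient bound) and the monotonicity of the Haar system, allows a direct induction on the tree of fathers $\mathS=\{\father{\mathR'_l}\}$: one proves a quantitative inequality of the form
\[
\Vert f'\Vert_\mathI < (5t^{-1}+2)\Vert f'+g'\Vert_\mathI - 2t - 8
\]
for every $\mathI\in\mathS$, inducting on the ``order'' of $\mathI$ (the depth of its descendants in $\mathS$). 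The base case uses condition~5) via Lemma~\ref{lem: estimate_byCoef} to get $\Vert f'+g'\Vert_\mathI\ge 2^d t$; the inductive step peels off the sons, uses the hypothesis on each, and absorbs the error terms coming from Lemma~\ref{lem: estimate_forQ}. Summing over the maximal cubes of $\mathS$ and using $\operatorname{supp}(f')\subseteq\bigcup_{\mathI\in\mathS}\mathI$ gives the bound. You should replace Steps~1 and~3 entirely with this local-to-global induction; the symmetrization in Step~2 is the right idea but for the wrong reason.
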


\begin{proof}  
We may assume that $\mu({\father{\mathR_1}})\leq\mu({\father{\mathR_2}})\leq\ldots\leq\mu({\father{\mathR_k}})$. Let
\begin{equation*}
(f_1,g_1) := L(f,g,\father{\mathR_1}).
\end{equation*}

Let  $V_1\subset\father{\mathR_1}$ denote the cube from which the values of  $f$ and $g$ are copied to  the other immediate successor cubes of $\father{\mathR_1}$ to define $L(f,g)$. Note that $\SPectrum{f_1}$ is obtained as follows:
\begin{itemize}
\item[1)] `Remove' all generalized chains from $MGCR(f)$ whose maximal  cubes are contained in $\father{\mathR_1}\setminus V_1$,
\item[2)] `Copy' all generalized chains  from $MGCR(f)$ whose maximal  cubes are contained in $V_1$ to  the other $2^d-1$  immediate successor cubes of $\father{\mathR_1}$.
\end{itemize} 
 Then $f_1$ and $g_1$ have all properties that are listed in Lemma~ \ref{lem: symmetry_main}. In particular,
\begin{equation*}
{\Vert f\Vert\over\Vert f+g\Vert}\leq{\Vert f_1\Vert\over\Vert f_1+g_1\Vert}.
\end{equation*}
Inductively, for $i=2,\ldots,k$, let
\begin{equation*}
(f_i,g_i) := L(f_{i-1},g_{i-1},\father{\mathR_i}).
\end{equation*}
Finally, let $f':=f_k$ and $g':=g_k$. It is easy to check that $f'$ and $g'$ have the following properties:
\begin{itemize}
\item[P1)]\ $\SPectrum{f'}\cap\SPectrum{g'}=\emptyset$,
\item[P2)]\ $[0,1)^d\notin \SPectrum{f'}$ and $[0,1)^d\notin \SPectrum{g'}$,
\item[P3)]\ $\father{\mathR^{'}_l}\notin\SPectrum{g'}$ for all $1\leq l\leq k^{'}$,
\item[P4)]\ $\mid\coefAt{\mathI}{j}{g'}\mid\leq 1$ for all $\mathI\in\D$ and $1\leq j\leq 2^d-1$,
\item[P5)]\ for every $1\leq l\leq k^{'}$ there exists $\mathI\in\mathR^{'}_l$ and $1\leq j\leq 2^d-1$ such that $\mid\coefAt{\mathI}{j}{f^{'}}\mid\geq t$,
\item[P6)]\ for every $1\leq l\leq k^{'}$ the functions $f'$ and $g'$ are `copied' from one immediate successor cube of the cube $\father{\mathR^{'}_l}$ to all the other $2^d - 1$ immediate successors,
\item[P7)]\ ${\Vert f\Vert\over\Vert f+g}\Vert\leq {\Vert f'\Vert\over\Vert f'+g'\Vert}$,
\end{itemize}
where $\{\mathR^{'}_1,\ldots,\mathR^{'}_{k^{'}}\}$ is the MGCR of $\SPectrum{f'}$. Let
$$\mathS := \{\father{\mathR^{'}_l}\ :\ 1\leq l\leq k^{'}\}.$$

Suppose that $\coefAt{\mathI}{j}{f'}\neq 0$. Then $\mathI$ belongs to some
generalized chain $\mathR^{'}_l$ and therefore there exists
$\mathJ\in\mathS$ such that $\mathI\subset\mathJ$. So we conclude
that
\begin{equation}\label{eq: fPrime_support}
supp(f')\subseteq \bigcup_{\mathI\in
\mathS} \mathI.
\end{equation}

We  say that $\mathI\in\mathS$ has \textit{order $k$} if
\begin{equation*}
\sonOrder{\mathI}{\mathS}{k}\neq\emptyset\quad\hbox{and}\quad
\sonOrder{\mathI}{\mathS}{k+1}=\emptyset.
\end{equation*}

Let us prove that for any $\mathI\in\mathS$
\begin{equation}\label{eq: statement_induction}
\Vert f'\Vert_\mathI<\bigl(5t^{-1}+2\bigr)\Vert f'+g'\Vert_\mathI-2t-8.
\end{equation}
We use induction on the order of $\mathI$. Suppose the order of $\mathI$ is $0$. Let $\mathI_1, \mathI_2,\ldots,\mathI_{2^d}$ be the immediate successor cubes of $\mathI$. Since $\mathI\in\mathS$ then, taking into account P5), P6) and Lemma \ref{lem: estimate_byCoef}, one has
\begin{equation*}
\Vert f'+g'\Vert_{\mathI_j} \geq t\ \hbox{for all}\ 1\leq j\leq 2^d.
\end{equation*}
Hence
\begin{equation}\label{eq: finalLemma_Norm1}
\Vert f'+g'\Vert_{\mathI} \geq 2^d t.
\end{equation}
It follows from Lemma \ref{lem: estimate_forQ} and P4) that
\begin{equation}\label{eq: finalLemma_Norm2}
\Vert \mathP_\mathI(g')\Vert_\mathI\leq 1.
\end{equation}
Since the order of $\mathI$ is equal to $0$, the monotonicity of the Haar system and P1) give
\begin{equation}\label{eq: haarmonotorderzero}
\Vert f'+g'\Vert_\mathI\geq \Vert
f'+\mathP_\mathI(g')\Vert_\mathI.
\end{equation}
Combining  \eqref{eq: finalLemma_Norm1}, \eqref{eq: finalLemma_Norm2} and \eqref{eq: haarmonotorderzero} gives
\begin{equation}
\begin{split}
(5t^{-1}+2\bigr)\Vert f'+g'\Vert_\mathI&\geq (5t^{-1}+1\bigr)\Vert f'+g'\Vert_\mathI+\Vert
f'+\mathP_\mathI(g')\Vert_\mathI\\ 
&\geq 2^d t(5t^{-1}+1\bigr)
+ \Vert f'+\mathP_\mathI(g')\Vert_\mathI  \\ 
&\geq
10+2t + \Vert f'+\mathP_\mathI(g')\Vert_\mathI  \\ &> \Vert
f'\Vert_\mathI+2t+9,
\end{split}
\end{equation} which gives
\eqref{eq: statement_induction} for $\mathI$.

Assume now that \eqref{eq: statement_induction} holds for all cubes of order $\leq k$.
We will prove the estimate for  all cubes $\mathI\in\mathS$ of
order $k+1$. Let $\mathI_1,\ldots,\mathI_{2^d}$ be the immediate successor cubes of
$\mathI$ and let $\son{\mathI}{\mathS} = \{\mathJ_1,\mathJ_2,\dots,\mathJ_{i}\}$. Because of the symmetry property  P6), each cube $\mathI_r$ contains the same number of the  $\mathJ_p$ cubes, $a$, say, where $a \ge 1$. Let $\mathJ_1,\dots,\mathJ_a$ be contained in $\mathI_1$.  The cubes $\mathJ_i$ are disjoint and their orders are $\leq k$.
Therefore, by the induction hypothesis,
\begin{equation}   \label{eq: finalLemma_NormOnIi}
\Vert f'\Vert_{\mathJ_i}\leq (5t^{-1}+2\bigr)\Vert
f'+g'\Vert_{\mathJ_i}-2t-8\quad\hbox{for all}\ i=1,2,\dots, a.
\end{equation}
Let $D :=\mathI_1\setminus\biggl(\bigcup_{i=1}^a
\mathJ_i\biggr)$,
\begin{equation*}
\alpha := \mathP_{\mathJ_1}(\mathP_{\mathJ_2}(\dots
\mathP_{\mathJ_a}(f')\dots)),
\end{equation*}
and
\begin{equation*}
\beta := \mathP_{\mathJ_1}(\mathP_{\mathJ_2}(\dots
\mathP_{\mathJ_s}(g')\dots)).
\end{equation*}
For $\alpha$ and $\beta$, we have
\begin{equation}\label{eq: finalLemma_AlphaBeta}
\alpha= f'\quad\hbox{and}\quad \beta=g'\ \hbox{on}\ D, 
\end{equation}
and by Lemma \ref{lem: estimate_forQ} and P4)
\begin{equation}\label{eq: finalLemma_normBeta}
\Vert \beta\Vert_{\mathJ_i}\leq 1\quad \hbox{for any}\  1\leq i\leq s.
\end{equation}
By Lemma~ \ref{lem: estimate_forQ}  and the monotonicity of the Haar system, we have
\begin{equation}\label{eq: finalLemma_normAlphaPlusBeta}
\Vert\alpha+\beta\Vert_{\mathI_1}\geq \Vert \alpha+\mathP_{\mathI_1}(\beta)\Vert_{\mathI_1}\geq \Vert
\alpha\Vert_{\mathI_1}-1.
\end{equation}
Using \eqref{eq: finalLemma_normBeta}, we get
\begin{equation*}
\begin{split}
\Vert \alpha+\beta\Vert_{\mathI_1\setminus D}&\leq \Vert \alpha\Vert_{\mathI_1\setminus
D}+\Vert \beta\Vert_{\mathI_1\setminus D}\\&=
\Vert \alpha\Vert_{\mathI_1\setminus D}+ \sum_{i=1}^a
\Vert\beta\Vert_{\mathJ_i}\\&\leq\Vert
\alpha\Vert_{\mathI_1\setminus D}+a.
\end{split}
\end{equation*}
From \eqref{eq: finalLemma_AlphaBeta} and \eqref{eq: finalLemma_normAlphaPlusBeta}, we get
\begin{equation*}
\begin{split}
\Vert f'+g'\Vert_D &= \Vert \alpha+\beta\Vert_D\\ &= \Vert
\alpha+\beta\Vert_{\mathI_1} - \Vert
\alpha+\beta\Vert_{\mathI_1\setminus D}\\&\geq
\Vert\alpha\Vert_{\mathI_1} -1 - \Vert
\alpha\Vert_{\mathI_{1}\setminus D}-a\\&= \Vert \alpha\Vert_{D}-a-1\\ &=
\Vert f'\Vert_{D}-a-1.
\end{split}
\end{equation*}
Combining this with  \eqref{eq: finalLemma_NormOnIi} gives 
\begin{equation*}
\begin{split}
\Vert f'\Vert_{\mathI_1} &= \Vert f'\Vert_{D} + \sum_{i=1}^a
\Vert f'\Vert_{\mathJ_i}\\&\leq 
\Vert f'+g'\Vert_{D}+a+1 +(5t^{-1}+2)\sum_{i=1}^a \Vert
f'+g'\Vert_{\mathJ_i}-a(2t+8)\\&\leq
(5t^{-1}+2)\Vert f'+g'\Vert_{\mathI_1}-2t-6.
\end{split}
\end{equation*}
Using the symmetry property P6), we conclude
\begin{equation*} \begin{split}
\Vert f'\Vert_{\mathI} &= 2^d\Vert f'\Vert_{\mathI_1}\\ &\leq (5t^{-1}+2)\Vert
f'+g'\Vert_{\mathI} - 2^d(2t+6)\\ &< (5t^{-1}+2)\Vert f'+g'\Vert_{\mathI}
- 2t-8. \end{split}
\end{equation*}
So \eqref{eq: statement_induction} holds for $\mathI$. This completes the induction proof.

Let $B := \bigcup_{\mathI\in\mathS}\mathI$. We can represent the
set $B$ as the union of some disjoint intervals from $\mathS$. Hence
\begin{equation*}  
\Vert f'\Vert_B < (5t^{-1}+2)\Vert f'+g'\Vert_B.
\end{equation*}
By \eqref{eq: fPrime_support}  we have $f'=0$ on the set $[0,1]^d\setminus B$, so
\begin{equation*} 
\Vert f'\Vert < (5t^{-1}+2)\Vert f'+g'\Vert.
\end{equation*}
This, together with P7), completes the proof.
\end{proof} \section{Main Results}
\begin{proof}[Proof of the Main Theorem]
Convergence is obvious if $f-G_m^{s,t}(f)=0$. So assume that $G_m^{(s,t)}(f)\neq f$. Let us define
\begin{equation}\label{eq: defintePandQ}
\begin{split}
p:=&{G_m^{(s,t)}(f)\over \max{\mid \coefAt{\Delta}{j}{f-G_m^{(s,t)}(f)}\ :\ (\Delta,j)\in\spectrum{f}\mid}},\\
q:=&{f-G_m^{(s,t)}(f)\over \max{\mid \coefAt{\Delta}{j}{f-G_m^{(s,t)}(f)}\ :\ (\Delta,j)\in\spectrum{f}\mid}}.
\end{split}
\end{equation}
It is clear that  $p$ and $q$ satisfy all the conditions of  Lemma \ref{lem: from_q_to_qtilda}. Therefore we have the estimate \eqref{eq: normPplusQ}. Define
\begin{equation}
\tilde{q} := q - \sum_{\mathI\in\mathM}\sum_{j=1}^{2^d-1}\coefAt{\mathI}{j}{q}\haar{\mathI}{j}.
\end{equation}
Clearly,
\begin{equation*}
\Vert q-\tilde{q}\Vert \leq(2^d-1)\ |\mathM|.
\end{equation*}
Combining this with  inequality \eqref{eq: normPplusQ} yields
\begin{equation}\label{eq: normPplusQtilde}
{\Vert p+\tilde{q}\Vert \over\Vert p+q\Vert}\leq 1+{\Vert q-\tilde{q}\Vert \over\Vert p+q\Vert}\leq 1+{2^d-1\over C(s,t)}.
\end{equation}
It remains to observe that functions $p$ and $\tilde{q}$ (in place of $f$ and $g$, respectively) satisfy  the conditions of  Lemma~ \ref{lem: symmetry_main}. Hence
\begin{equation*}
{\Vert p\Vert \over\Vert p+\tilde{q}\Vert}\leq C(t).
\end{equation*}
Combining this with   \eqref{eq: normPplusQtilde} and \eqref{eq: defintePandQ} we get
\begin{equation*} 
{\Vert G_m^{(s,t)}(f)\Vert\over\Vert f\Vert} \leq C(t)\cdot\Bigl(1+{2^d-1\over C(s,t)}\Bigr),
\end{equation*} which gives the uniform boundedness inequality \eqref{eq: uniformbounded} of the Main Theorem
with \begin{equation} \label{eq: greedyapproximants} C(s,t,d) = C(t)\cdot\Bigl(1+{2^d-1\over C(s,t)}\Bigr). \end{equation}
To deduce convergence of the algorithm from the uniform  boundedness of the greedy approximants, we follow the argument from \cite{W} for the Thresholding Greedy Algorithm.  Let $\Psi:= (\psi_n)_{n=0}^\infty$ be the enumeration of the multivariate Haar basis in the order determined by $\prec$. Then $\Psi$ is a Schauder basis for $L_1[0,1]^d$.  For $N \ge 0$, Let $P_N$ denote the basis projection onto $\operatorname{span} (\psi_j)_{j=0}^N$. Suppose that $N = (2^d-1)N_1+1$ so that $P_N$ projects onto the span of all Haar functions supported on the first $N_1$ dyadic cubes in the order $\prec$.
It is easily seen from the definition of the algorithm that for  all sufficiently large $n$ there exists $m$ such that
$$G^{s,t}_n(f) = P_N(f) + G_m^{s,t}(f-P_N(f)).$$
Since $\|G_m^{s,t}(f - P_N(f))\| \le C(s,t,d)\|f - P_N(f)\| \rightarrow 0$ as $N \rightarrow \infty$, we get $f = \lim_{n \rightarrow \infty} G^{s,t}_n(f)$ as required. \end{proof}
\begin{corollary} For every $0<t<1$ there is a convergent implementation of the weak greedy algorithm with weakness parameter $t$ for the multivariate Haar basis for $\Ld$ such that, for all $f \in \Ld$, we have
$$\|G^t_n(f)\| \le \frac{C_d}{1-t}\|f\|,$$
where $C_d \ll 2^d$.\end{corollary}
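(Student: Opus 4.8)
The plan is to deduce the corollary from the Main Theorem. The key observation is that, for every $0<t<s<1$, the sequence $(G_m^{s,t}(f))$ defined in Section~2 is already a legitimate realization of the weak thresholding greedy algorithm with weakness parameter $t$ in the sense of \eqref{eq: wtga}. Indeed, applying Step~2) to the cube $\mathI=\tilde\Delta_m$ itself gives $\max_{1\le j\le 2^d-1}\coefAbs{\tilde\Delta_m}{j}{R_{m-1}^{s,t}}\ge s\,\coefAbs{\Delta_m}{j_m}{R_{m-1}^{s,t}}$, so Step~3) forces
$$\coefAbs{\tilde\Delta_m}{i_m}{R_{m-1}^{s,t}}\ \ge\ \frac{t}{s}\cdot s\,\coefAbs{\Delta_m}{j_m}{R_{m-1}^{s,t}}\ =\ t\,\max_{(\mathI,i)}\coefAbs{\mathI}{i}{R_{m-1}^{s,t}}.$$
Since each Haar function is subtracted at most once, $\coefAt{\tilde\Delta_m}{i_m}{R_{m-1}^{s,t}}=\coefAt{\tilde\Delta_m}{i_m}{f}$, and the coefficients of $R_{m-1}^{s,t}$ at the indices not yet selected agree with those of $f$; a short induction on $m$ — using that $\max_{i\notin\Lambda_m^t(f)}|c_i(f)|$ is nonincreasing in $m$ and that each previously selected coefficient was $\ge t$ times the maximal residual coefficient at its stage — shows that the accumulated set $\Lambda_m^t(f):=\{(\tilde\Delta_r,i_r):1\le r\le m\}$ satisfies \eqref{eq: wtga}.

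It then remains to choose $s$ so as to optimize the constant. By \eqref{eq: greedyapproximants} the Main Theorem gives $\|G_m^{s,t}(f)\|\le C(s,t,d)\|f\|$ with $C(s,t,d)=C(t)\bigl(1+(2^d-1)/C(s,t)\bigr)$, where inspection of the proofs of Lemmas~\ref{lem: symmetry_main1} and \ref{lem: from_q_to_qtilda} allows one to take $C(t)=5t^{-1}+2$ and $C(s,t)=\tfrac1{48}\min\bigl(s-t,\,s(1-s)\bigr)$. For $t\ge\tfrac12$ I would take $s=\tfrac{1+t}{2}$, so that $s-t=\tfrac{1-t}{2}$ and $s(1-s)\ge\tfrac{1-t}{4}$, giving $C(s,t)\ge\tfrac{1-t}{192}$ and $C(t)\le 12$, hence a bound of the form $\tfrac{C'(2^d-1)}{1-t}\|f\|$; for $t<\tfrac12$ one simply runs the fixed algorithm $G_m^{3/4,1/2}$, which a fortiori realizes the WTGA with weakness parameter $t$ and whose constant is at most a fixed multiple of $2^d-1$, hence again at most $\tfrac{C'(2^d-1)}{1-t}\|f\|$ since $1-t\le1$.

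This already proves the corollary with $C_d$ of order $2^d$; the remaining point, and the genuinely hard one, is to bring $C_d$ down to $C_d\ll 2^d$. The only source of the exponential factor is the crude bound $\|q-\tilde q\|\le(2^d-1)|\mathM|$ in the proof of the Main Theorem, obtained by estimating each of the $2^d-1$ Haar coefficients on a cube of $\mathM$ separately. The improvement I would pursue is to note that, for a fixed cube $\mathI$, the function $\sum_{j=1}^{2^d-1}\coefAt{\mathI}{j}{q}\haar{\mathI}{j}$ equals the oscillation of the dyadic averages of $q$ at $\mathI$: on each immediate successor $\mathI_k$ its constant value is $\operatorname{avg}_{\mathI_k}(q)-\operatorname{avg}_{\mathI}(q)$, so its $L_1$-norm is at most $2\|q\|_\mathI$, with no factor $2^d$. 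One must then bound $\sum_{\mathI\in\mathM}\|q\|_\mathI$; the cubes of $\mathM$ — the fathers of the generalized chains $\mathR_l$ together with the overlap cubes $\SPectrum{p}\cap\SPectrum{q}$ — are nested rather than disjoint, but their chain structure should permit a telescoping, bounded-overlap estimate (modeled on the disjointness argument already used in the proof of Lemma~\ref{lem: from_q_to_qtilda}) that loses only a factor essentially independent of $d$. Feeding this back into \eqref{eq: greedyapproximants} yields the asserted bound, with $C_d\ll2^d$. Controlling the over-counting of nested cubes of $\mathM$ in this last estimate is where I expect the main difficulty to lie.
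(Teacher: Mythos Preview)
Your first two paragraphs are essentially the paper's own argument, carried out with somewhat more care: you verify explicitly that $G_m^{s,t}$ is a weak thresholding algorithm with weakness parameter $t$ (the paper takes this as evident from the setup), you extract the same explicit lower bound for $C(s,t)$ and upper bound for $C(t)$ from the proofs of Lemmas~\ref{lem: from_q_to_qtilda} and \ref{lem: symmetry_main1}, and you choose $s=(1+t)/2$ exactly as the paper does. Your additional case $t<1/2$, handled by running a fixed algorithm with weakness parameter $1/2$, is a legitimate patch for the $1/t$ factor in $C(t)$ that the paper's terse proof does not address; it is correct and costs nothing.

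The problem is your third paragraph, which rests on a misreading of the notation. In this paper $\ll$ is Vinogradov notation: $A\ll B$ means $A\le C\cdot B$ for some absolute constant $C$, i.e.\ $A=O(B)$. (Compare the later corollary, which shows $C_d\gg\sqrt d$, i.e.\ $C_d\ge c\sqrt d$; the paper is asserting $\sqrt d\lesssim C_d\lesssim 2^d$, not that $C_d$ is asymptotically smaller than $2^d$.) Thus the bound $C_d=O(2^d)$ that your first two paragraphs already establish \emph{is} the statement $C_d\ll 2^d$, and nothing further is required. Your proposed refinement of the estimate $\|q-\tilde q\|\le(2^d-1)|\mathM|$ is therefore unnecessary; moreover, as you yourself note, the cubes in $\mathM$ are nested rather than disjoint, so the telescoping you sketch would need a genuine new idea, and there is no indication in the paper that any such improvement is available.
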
 \begin{proof}  From the proof of Lemma~\ref{lem: from_q_to_qtilda},  $C(s,t) \ge \min(s(1-s),s-t)/24$,
and from the proof of  Lemma~\ref{lem: symmetry_main}, $C(t) \le 5/t + 12$. Setting $s = (1+t)/2$ and substitutuing these estimates into \eqref{eq: greedyapproximants} gives the result for $G_n^t(f) := G_n^{s,t}(f)$.
\end{proof}

Finally, let us show that $C_d \gg \sqrt{d}$. Let $(r_n)_{n=1}^\infty$ be the usual Rademacher functions defined on $[0,1]$. For each finite $A \subset \mathbb{N}$, recall that the Walsh function $w_A := \prod_{n \in A} r_n$. The Walsh system $\mathcal{W} := \{ w_A  \colon A \in \mathbb{N}^{(<\infty)}\}$ is a fundamental orthogonal system for $L_1[0,1]$.

\begin{theorem} Let $0< t < 1$. The greedy approximants with respect to $\mathcal{W}$ are unbounded for every implementation of the weak thresholding greedy algorithm with weakness parameter $t$. 
\end{theorem}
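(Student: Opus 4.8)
The plan is to show directly that no bound $\|G^t_n(f)\|\le K\|f\|$ can hold, by producing, for each fixed $t\in(0,1)$, an explicit family of functions on which \emph{every} implementation of the weak thresholding greedy algorithm with weakness parameter $t$ is forced to produce a partial sum of norm $\gg \|f\|$. Fix $\delta$ with $0<\delta<t$ and, for $M\ge 1$, set $f_M:=\prod_{j=1}^M(1+\delta r_j)$. Since $0<\delta<1$ each factor is strictly positive, so $f_M\ge 0$ and, using independence of the Rademacher functions, $\|f_M\|=\int_0^1 f_M=\prod_{j=1}^M\int_0^1(1+\delta r_j)=1$. Expanding the product gives the Walsh expansion $f_M=\sum_{A\subseteq\{1,\dots,M\}}\delta^{|A|}w_A$, so the nonzero Walsh coefficients of $f_M$ take the value $\delta^l$ with multiplicity $\binom{M}{l}$; in particular the decreasing rearrangement of their moduli is $1$ (from $A=\emptyset$), then $M$ copies of $\delta$ (from the singletons $A=\{j\}$), then values $\le\delta^2$.

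Next I would observe that the multiplicative gap $\delta<t$ between the degree-$1$ and degree-$2$ coefficients \emph{forces} the index set of the order-$(M+1)$ approximant. Indeed, if $\Lambda^t_{M+1}(f_M)$ were not equal to $\{w_\emptyset\}\cup\{w_{\{j\}}:1\le j\le M\}$, it would omit some coefficient of modulus $\ge\delta$ while containing some $w_A$ with $|A|\ge 2$, of modulus $\le\delta^2$; then \eqref{eq: wtga} would give $\delta^2\ge t\delta$, contradicting $\delta<t$. Hence, for every implementation,
\[
G^t_{M+1}(f_M)\;=\;w_\emptyset+\delta\sum_{j=1}^M w_{\{j\}}\;=\;1+\delta\sum_{j=1}^M r_j .
\]
Finally, by Khintchine's inequality (or an elementary $L_2$--$L_4$ Hölder computation), $\bigl\|\sum_{j=1}^M r_j\bigr\|\ge c\sqrt M$ for an absolute constant $c>0$, so $\|G^t_{M+1}(f_M)\|\ge c\delta\sqrt M-1$. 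Since $\|f_M\|=1$ and the left-hand side tends to $\infty$, the greedy approximants cannot be uniformly bounded, which is the assertion of the theorem.

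The one point that requires thought — rather than being a technical obstacle — is the choice of test function. One wants a function of small $L_1$-norm whose low-degree part is large in $L_1$, together with a genuine coefficient gap forcing the algorithm to select exactly that part. Naive attempts, such as perturbing $\prod_{j=1}^M(1+r_j)$ so as to create a gap, fail: the gap one introduces (coefficients dropping by a factor $<t$) also destroys the cancellation responsible for the small norm, and the resulting ratio stays bounded (by roughly $1/(1-t)$). The density $f_M=\prod_{j=1}^M(1+\delta r_j)$ sidesteps this: positivity makes $\|f_M\|=1$ automatically, the truncation $1+\delta\sum_j r_j$ to degrees $\le 1$ already has norm of order $\delta\sqrt M$, and the built-in multiplicative factor $\delta<t$ between consecutive degrees supplies the gap, so the verification of the three steps above is then routine.
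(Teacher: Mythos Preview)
Your argument is correct and is essentially identical to the paper's proof: both use the Riesz product $f_M=\prod_{j=1}^M(1+\delta r_j)$ with $0<\delta<t$, observe that $\|f_M\|=1$ by positivity and independence, note that the coefficient gap forces $G^t_{M+1}(f_M)=1+\delta\sum_j r_j$ for every implementation, and finish with Khintchine's inequality. Your write-up is in fact slightly more detailed than the paper's, since you spell out why the weakness condition \eqref{eq: wtga} forces the exact selection set at step $M+1$.
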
 \begin{proof} Let $0 < u < t$. For $N \ge 1$, consider 
$$ f_N := \prod_{n=1}^N (1 + ur_n).$$
From the independence of the Rademacher functions and the fact that $1 + ur_n\ge0$, we get $\|f_N\| = 1$.
Also, for \textit{any} implementation of the weak thresholding greedy algorithm with weakness parameter $t$, we have 
$$G^t_{N+1}(f_N) = 1 + u\sum_{n=1}^N r_n.$$ So, by Khinchine's inequality, \begin{equation} \label{eq: Khinchine}
\|G^t_{N+1}(f_N)\| \ge u\|\sum_{n=1}^N r_n\| - 1 \gg u \sqrt{N}, \end{equation}
which gives the unboundedness of the greedy approximants.
\end{proof} \begin{corollary} $C_d \gg \sqrt{d}$. \end{corollary}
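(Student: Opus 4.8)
The plan is to transplant the Walsh counterexample of the previous Theorem into $\Ld$ through the Haar functions living on the single cube $[0,1)^d$. For $\emptyset\neq A\subseteq\{1,\dots,d\}$ write $j_A:=\sum_{k\in A}2^{d-k}$, so that $A\mapsto j_A$ is a bijection onto $\{1,\dots,2^d-1\}$ and, on $[0,1)^d$, $\haar{[0,1)^d}{j_A}(x)=\prod_{k\in A}r_{[0,1)}^{(1)}(x_k)$. A direct computation shows that on each of the $2^d$ sub-cubes $Q_b:=\prod_{k=1}^d[\tfrac{b_k}{2},\tfrac{b_k+1}{2})$ ($b\in\{0,1\}^d$) one has $\haar{[0,1)^d}{j_A}\equiv\prod_{k\in A}(-1)^{b_k}$, which is precisely the constant value of the Walsh function $w_A$ on the $b$-th dyadic interval of length $2^{-d}$; since all these cells have measure $2^{-d}$, the correspondence $\Id{[0,1)^d}\leftrightarrow w_\emptyset$, $\haar{[0,1)^d}{j_A}\leftrightarrow w_A$ extends to a linear isometry from $V:=\operatorname{span}\bigl(\{\Id{[0,1)^d}\}\cup\{\haar{[0,1)^d}{j}:1\le j\le 2^d-1\}\bigr)\subset\Ld$ onto $\operatorname{span}\{w_A:A\subseteq\{1,\dots,d\}\}\subset L_1[0,1]$. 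The point is that this isometry carries the Haar coefficient sequence of an $f\in V$ to the Walsh coefficient sequence of its image, and therefore intertwines every weak thresholding greedy algorithm.

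Next I would apply the previous Theorem with $N=d$. Fix $0<u<t$ and put $\tilde f(x):=\prod_{k=1}^d\bigl(1+u\,r_{[0,1)}^{(1)}(x_k)\bigr)\in\Ld$; its image under the isometry is $f_d=\prod_{n=1}^d(1+ur_n)$ (the vector $f_N$ of that proof with $N=d$), so $\|\tilde f\|=\|f_d\|=1$, and the only nonzero Haar coefficients of $\tilde f$ are $\coefAt{[0,1)^d}{j_A}{\tilde f}=u^{|A|}$ together with the constant coefficient $1$. Since these match the Walsh coefficients of $f_d$ exactly, the forced-selection computation inside the proof of the previous Theorem carries over verbatim — here one uses $u<t$ — and shows that for the algorithm of Section~2 (which never selects $\Id{[0,1)^d}$) the first $d$ steps have $\Delta_m=\tilde\Delta_m=[0,1)^d$ and select, in some order, the $d$ first-level Haar functions $\haar{[0,1)^d}{j_{\{k\}}}$, each with coefficient $u$; thus $G^{s,t}_d(\tilde f)=u\sum_{k=1}^d\haar{[0,1)^d}{j_{\{k\}}}$. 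Because $\sum_{k=1}^d\haar{[0,1)^d}{j_{\{k\}}}(x)=\sum_{k=1}^d r_{[0,1)}^{(1)}(x_k)$ is a sum of $d$ independent $\pm1$ random variables, Khinchine's inequality gives $\|G^{s,t}_d(\tilde f)\|_{\Ld}=u\,\mathbb{E}\bigl|\varepsilon_1+\dots+\varepsilon_d\bigr|\ge u\,(d/2)^{1/2}$. (For the general weak thresholding algorithm of the Introduction the constant $\Id{[0,1)^d}$ is picked as well, recovering $1+u\sum_{n=1}^d r_n$ exactly as in the Theorem; this changes the relevant norm only by an additive $O(1)$.)

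Finally I would combine this with the Corollary that produces the bound $C_d\ll 2^d$: the convergent implementation constructed there is $G^{s,t}_n$ with $s=(1+t)/2$ and satisfies $\|G^{s,t}_n(f)\|\le\frac{C_d}{1-t}\|f\|$ for all $f\in\Ld$; applying it to $\tilde f$ yields $\frac{C_d}{1-t}\ge\|G^{s,t}_d(\tilde f)\|\ge u\,(d/2)^{1/2}$, and fixing $t$ and $u$ (say $t=\tfrac12$, $u=\tfrac14$) gives $C_d\gg\sqrt d$, as claimed.

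The main obstacle is checking that the greedy dynamics genuinely transfer through the isometry: one must verify that a weakness-$t$ WTGA run on $\tilde f$ in $\Ld$ never reaches outside $V$ (true because every Haar coefficient of $\tilde f$ outside $V$ vanishes, so all residuals stay in $V$, and because $[0,1)^d$ is the largest cube one has $\tilde\Delta_m=[0,1)^d$ throughout), and that with $u<t$ the $2^d$-point coefficient data of $\tilde f$ forces exactly the same selections as in the Walsh case. The only remaining bookkeeping is the constant term $\Id{[0,1)^d}$, which belongs to the Schauder enumeration used by the general algorithm but not to the algorithm of Section~2; since it affects the relevant norm by only $O(1)$, it is negligible against $\sqrt d$ and does not alter the conclusion.
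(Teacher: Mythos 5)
Your proposal is correct and takes the same route as the paper: the paper simply observes that $\chi_{[0,1)^d}\cup(h^{(j)}_{[0,1)^d})_{j=1}^{2^d-1}$ has the same joint distribution as the initial Walsh segment $\{w_A : A\subset\{1,\dots,d\}\}$ and invokes the Khinchine estimate \eqref{eq: Khinchine} from the preceding theorem, which is exactly the isometry $V\to\operatorname{span}\{w_A\}$ you construct. You spell out in more detail the coefficient bookkeeping, the forced selections (noting that $u<t<t/s$ since $s<1$ keeps all $|A|\ge 2$ coefficients sub-threshold), and the $O(1)$ effect of the constant term — all consistent with the paper's one-line argument.
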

\begin{proof} It suffices to observe that  the Haar functions $\chi_{[0,1)^d} \cup (h^{(j)}_{[0,1)^d})_{j=1}^{2^d-1}$ have the same joint distribution as the initial segment of the Walsh system $\{w_A \colon A\subset\{1,\dots,d\}\}$. Hence the result follows from \eqref{eq: Khinchine}.
\end{proof}
\section{The Boundary Cases $s=1$ and $s=t$}
In this section we will assume that $d=2$, that $N=2k$ is even number, and that $0<\epsilon<1$. Let  $\Delta_n := [0,{1\over 2^n})\times [0,{1\over 2^n})$. Our starting point is the function
\begin{equation}
f_N = 1 + \sum_{n=0}^{N-1}\sum_{j=1}^3 \haar{\Delta_n}{j}.
\end{equation}
It is easy to check, that 
\begin{equation}
f_N=\begin{cases}2^{2N}\ \hbox{on}\ \Delta_N,\\ 0\ \hbox{otherwise}.\end{cases}
\end{equation}
So $\Vert f_N\Vert=1$.

Now let us consider the case when $s=1$. Let 
\begin{equation*}
f_N^{\epsilon} := 1 + \sum_{n=0}^{k-1}\sum_{j=1}^3 \bigl(\haar{\Delta_{2n+1}}{j} + (1-\epsilon)\haar{\Delta_{2n}}{j}\bigr). 
\end{equation*}
It is clear that $\Vert f_N^\epsilon\Vert\leq \Vert x_N\Vert + 3k\epsilon$. On the other hand, it is easy to check that
\begin{equation*}
G_{3k+1}^{1,t}(f_N^{\epsilon}) = 1 + \sum_{n=0}^{k-1}\sum_{j=1}^3 \haar{\Delta_{2n+1}}{j}.
\end{equation*}
By using Lemma \ref{lem: coefDiff_Neighboor} for $\mathI=\Delta_{2n+1}$, $\mathJ=\Delta_{2n+2}$, and $\mathK= \Delta_{2n+3}$, we get
 $$\Vert G_{3k+1}^{1,t}(f_N^{\epsilon})\Vert \ge \sum_{n=0}^{k-1} \Vert
G_{3k+1}^{1,t}(f_N^{\epsilon}) \Vert_{\Delta_{2n+1}\setminus \Delta_{2n+3}} \geq \frac{k}{8},$$ so
\begin{equation*}
{\Vert G_{3k+1}^{1,t}(f_N^{\epsilon}) \Vert\over\Vert f_N^\epsilon\Vert} \geq {k\over 8(1+3k\epsilon)}.
\end{equation*}
Since $k$ and $\epsilon$ are arbitrary, we conclude that the operator $G_n^{1,t}$ is not bounded.

Now, let us consider the case $s=t$. Let
\begin{equation}
g_N^\epsilon := t\Biggl(1 + \sum_{n=0}^{N-1} \biggl(\haar{\Delta_n}{1} + \haar{\Delta_n}{2} + (1-\epsilon)\haar{\Delta_n}{3}\biggr)\Biggr) + \haar{\Delta_N}{1}.
\end{equation}  Note that
$$ g_N^\epsilon = t f_N  - t\epsilon \sum_{n=0}^{N-1}\haar{\Delta_n}{3}+ \haar{\Delta_N}{1},$$
whence by the triangle inequality $\Vert g_N^{\epsilon}\Vert \leq 1+t + Nt\epsilon$.  Note that
\begin{equation}
G_{2N+1}^{t,t}(g_N^\epsilon) = t\Biggl(1 + \sum_{n=0}^{N-1} \biggl(\haar{\Delta_n}{1}+ \haar{\Delta_n}{2}\biggr)\Biggr).
\end{equation} Hence by Lemma~\ref{lem: coefDiff_SameLevel}
$$\Vert G_{2N+1}^{t,t}(g_N^\epsilon)\Vert \geq  \sum_{n=0}^{N-1}\Vert  G_{2N+1}^{t,t}(g_N^\epsilon) \Vert_{\Delta_n \setminus \Delta_{n+1}} \ge \frac{Nt}{2}.$$
So $$ \frac{\Vert G_{2N+1}^{t,t}(g_N^\epsilon)\Vert}{\Vert g_N^{\epsilon}\Vert} \ge \frac{Nt}{2(1 + t + Nt\epsilon)}.$$
Since $N$ and $\epsilon$ are arbitrary, we conclude that the operator $G_n^{t,t}$ is not bounded.

 By a ``gliding hump'' argument (see \cite{W} for the details)  there exists $f \in \Ld$ for which $(G^{t,t}_n(f))$ diverges.

The case $s=t=1$ implies the following result.

\begin{corollary} \label{cor: notQG} The multivariate Haar system is not a quasi-greedy basis of $\Ld$.
\end{corollary}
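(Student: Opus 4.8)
The plan is to deduce Corollary~\ref{cor: notQG} directly from the divergence construction in the boundary case $s=t=1$, combined with Wojtaszczyk's characterization of quasi-greedy bases. First I would recall that the algorithm $G_n^{s,t}$ with $s=t=1$ is precisely (one implementation of) the Thresholding Greedy Algorithm: when $s=t=1$, the dilated cube $\tilde\Delta_m$ in step 2) collapses to $\Delta_m$ itself (since one needs $|c^{(i)}_\mathI(R_{m-1})|\ge|c^{(j_m)}_{\Delta_m}(R_{m-1})| = \max$, which forces $\mathI=\Delta_m$), and then step 3) with ${t/s}=1$ selects a coefficient of $\Delta_m$ of maximal absolute value. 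Thus $G_n^{1,1}(f)$ is a legitimate sequence of greedy approximants in the sense of \eqref{eq: tga}, with ties broken according to the ordering $\prec$.

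Next I would invoke the computation already carried out in this section for the case $s=t$: taking $s=t=1$ in the analysis of $g_N^\epsilon$ (or running the $s=1$ analysis of $f_N^\epsilon$, which also has $t$ irrelevant), we obtain functions whose greedy approximants $G^{1,1}_n$ have norm bounded below by a quantity tending to infinity while the functions themselves have norm bounded above, so that $\sup_{f,n}\|G^{1,1}_n(f)\|/\|f\| = \infty$. Hence the TGA for the multivariate Haar basis is not uniformly bounded, i.e.\ the basis is not quasi-greedy by the very definition of quasi-greediness given in Section~\ref{sec: intro}. (Alternatively, the ``gliding hump'' argument produces a single $f$ for which $(G^{1,1}_n(f))$ diverges, and then Wojtaszczyk's theorem \cite[Theorem~1]{W} gives the conclusion; either route suffices.)

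The only genuine point requiring care — and the step I expect to be the main obstacle — is verifying that the norm blow-up survives the passage from the \emph{particular} tie-breaking rule built into $G^{1,1}_n$ to the \emph{quasi-greedy constant}, which must control \emph{every} choice of greedy sets $\Lambda_n(f)$ satisfying \eqref{eq: tga}. This is not actually an issue, because quasi-greediness requires the bound $\|G_n(f)\|\le K\|f\|$ for \emph{all} admissible greedy approximants, and we have exhibited admissible ones (namely those produced by $G^{1,1}_n$, whose selections at each stage do satisfy \eqref{eq: tga}) that violate any fixed bound; so a fortiori no uniform $K$ exists. I would state this observation explicitly to close the argument. No further computation is needed, since all the quantitative estimates were already established in the boundary-case discussion above.
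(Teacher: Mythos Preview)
Your proposal is correct and matches the paper's approach exactly: the paper's entire proof is the single sentence ``The case $s=t=1$ implies the following result,'' and you have correctly spelled out why---namely that $G_n^{1,1}$ produces admissible TGA approximants (in the sense of \eqref{eq: tga}) whose norm blow-up, established earlier in Section~8, violates the definition of quasi-greediness. Your explicit remark that quasi-greediness must hold for \emph{every} admissible choice of $\Lambda_n(f)$, so that exhibiting one bad tie-breaking rule suffices, is exactly the missing logical link the paper leaves implicit.
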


\section{Appendix: Proof of Lemma~\ref{lem: coefDiff_SameLevel}}
Here we prove Lemma \ref{lem: coefDiff_SameLevel} for $d\geq 3$. The first statement is identical to the case $d=2$. We prove the second and third statements. Let $H$ be the value of $\mathS_\mathI(f)$ on $\mathI$ and also $\mu(\mathI)=\delta$. Define signs $\sigma_p=\pm 1$ in such a way that the value of $\mathS_\mathJ(f)$ is equal to 
$H + \frac{1}{\delta}\sum^{2^d-1}_{p=1} a_p$ where $a_p=\sigma_p\coefAt{\mathI}{p}{f}$. Let $(\mathJ_j)_{j=1}^{2^d-1}$ be an enumeration of the 
$2^{d}-1$ immediate successors of $\mathI$ excluding $\mathJ$. For $1 \le j \le 2^d-1$, let $\epsilon_j^{(p)} = \pm1$ denote the value taken by $\delta \sigma_p h_{\mathI}^{(p)}$ on the cube $\mathJ_j$.

 Then by monotonicity
\begin{equation}
\Vert f\Vert_{\mathI\setminus\mathJ}\geq \frac{\delta}{2^d} \sum_{j=1}^{2^d-1}\mid H + {1\over\delta}\sum_{p=1}^{2^d-1}\epsilon_j^{(p)}a_p\mid.
\end{equation}

For a fixed $j$ (resp., for a fixed $p$) there are exactly $2^{d-1}$  of the coefficients $\{\epsilon_j^{(p)}\}$ that are equal to $-1$, and
for distinct $p$ and $q$  the following orthogonality property is easily verified:
\begin{equation} \label{eq: orthogonalitycond}  \sum_{\{j \colon \epsilon^{(p)}_j = -1\}} \epsilon^{(q)}_j = 0. \end{equation} 
 Let us fix some $p_0$. Using \eqref{eq: orthogonalitycond} we get
\begin{equation*} \begin{split}
\Vert f\Vert_{\mathI\setminus\mathJ} &\geq {\delta\over 2^d}\mid \sum_{\{j \colon \epsilon^{(p_0)}_j = -1\}}
\Bigl(H + {1\over\delta}\sum_{p=1}^{2^d-1}\epsilon_j^{(p)}a_p\Bigr)\mid\\ &= {1\over 2^d} \mid 2^{d-1}H\delta - 2^{d-1}a_{p_0}\mid={\mid H\delta-a_{p_0}\mid\over 2},  \end{split}
\end{equation*}
which proves the second statement of the lemma.

 Finally, for distinct $p$ and $q$, we have 
\begin{equation*} \begin{split}
\Vert f\Vert_{\mathI\setminus\mathJ} &\geq {1\over 2} \Bigl( {\mid H\delta - a_{p}\mid\over 2} +{\mid H\delta - a_{q}\mid\over 2}\Bigr)\\ &\geq {\mid a_{p} - a_{q}\mid\over 4}\geq \mid {\mid\coefAt{\mathI}{p}{f}\mid-\mid\coefAt{\mathI}{q}{f}\mid\over 4}\mid, \end{split}
\end{equation*} 
which proves the last statement of the lemma.

\end{document}